\numberwithin{equation}{section}
\newtheorem{Theorem}{Theorem}[section]
\newtheorem{Corollary}[Theorem]{Corollary}
\newtheorem{Lemma}[Theorem]{Lemma}
 { \theoremstyle{definition}
\newtheorem{Remark}[Theorem]{Remark} }
\begin{document}

\allowdisplaybreaks

\renewcommand{\thefootnote}{$\star$}

\newcommand{\arXivNumber}{1603.04328}

\renewcommand{\PaperNumber}{093}

\FirstPageHeading

\ShortArticleName{Precise Deviations Results for the Maxima of Some Determinantal Point Processes}

\ArticleName{Precise Deviations Results for the Maxima of Some\\ Determinantal Point Processes: the Upper Tail\footnote{This paper is a~contribution to the Special Issue on Asymptotics and Universality in Random Matrices, Random Growth Processes, Integrable Systems and Statistical Physics in honor of Percy Deift and Craig Tracy. The full collection is available at \href{http://www.emis.de/journals/SIGMA/Deift-Tracy.html}{http://www.emis.de/journals/SIGMA/Deift-Tracy.html}}}

\Author{Peter EICHELSBACHER~$^\dag$, Thomas KRIECHERBAUER~$^\ddag$ and Katharina SCH\"ULER~$^\ddag$}

\AuthorNameForHeading{P.~Eichelsbacher, T.~Kriecherbauer and K.~Sch\"uler}

\Address{$^\dag$~Fakult\"at f\"ur Mathematik, Ruhr-Universit\"at Bochum, 44780 Bochum, Germany}
\EmailD{\href{mailto:peter.eichelsbacher@rub.de}{peter.eichelsbacher@rub.de}}
\URLaddressD{\url{http://www.ruhr-uni-bochum.de/ffm/Lehrstuehle/stochastik/}}

\Address{$^\ddag$~Mathematisches Institut, Universit\"at Bayreuth, 95440 Bayreuth, Germany}
\EmailD{\href{mailto:thomas.kriecherbauer@uni-bayreuth.de}{thomas.kriecherbauer@uni-bayreuth.de}, \href{mailto:katharina.schueler@uni-bayreuth.de}{katharina.schueler@uni-bayreuth.de}}
\URLaddressD{\url{http://www.diffgleichg.uni-bayreuth.de/en/}}

\ArticleDates{Received May 31, 2016, in f\/inal form September 11, 2016; Published online September 21, 2016}

\Abstract{We prove precise deviations results in the sense of Cram\'er and Petrov for the upper tail of the distribution of the maximal value for a special class of determinantal point processes that play an important role in random matrix theory. Here we cover all three regimes of moderate, large and superlarge deviations for which we determine the leading order description of the tail probabilities. As a corollary of our results we identify the region within the regime of moderate deviations for which the limiting Tracy--Widom law still predicts the correct leading order behavior. Our proofs use that the determinantal point process is given by the Christof\/fel--Darboux kernel for an associated family of orthogonal polynomials. The necessary asymptotic information on this kernel has mostly been obtained in [Kriecherbauer T., Schubert K., Sch{\"u}ler K., Venker M., \textit{Markov Process. Related Fields} \textbf{21} (2015), 639--694]. In the superlarge regime these results of do not suf\/f\/ice and we put stronger assumptions on the point processes. The results of the present paper and the relevant parts of [Kriecherbauer T., Schubert K., Sch{\"u}ler K., Venker M., \textit{Markov Process. Related Fields} \textbf{21} (2015), 639--694] have been proved in the dissertation [Sch{\"u}ler K., Ph.D.~Thesis, Universit{\"a}t Bayreuth, 2015].}

\Keywords{determinantal point process; extreme value distribution; Tracy--Widom distribution; moderate deviations; large deviations; superlarge deviations; random matrix theory; Christof\/fel--Darboux kernel; Riemann--Hilbert problem}

\Classification{60F10; 60B20; 35Q15; 42C05}

\begin{flushright}
\begin{minipage}{100mm}
\it Dedicated to Percy Deift and Craig Tracy on the occasion of their 70'th birthdays with deep admiration for their ground breaking work and great leadership!
\end{minipage}
\end{flushright}

\renewcommand{\thefootnote}{\arabic{footnote}}
\setcounter{footnote}{0}

\section{Introduction}

\subsection{Model and general assumptions}

In this paper we consider a class of determinantal point processes that is prominent in random matrix theory. There a well studied ensemble type consists of probability measures on $N \times N$ Hermitian matrices that are invariant under unitary conjugation (unitary invariant ensembles) and for which the joint distribution of the vector $\lambda \in \mathbb{R}^N$ of eigenvalues has a density of the form
\begin{gather}\label{defensemble}
P_{N,V}(\lambda) = Z_{N,V}^{-1} \prod_{1\leq j<k\leq N} (\lambda_k-\lambda_j)^2 \prod_{r=1}^N e^{-N V(\lambda_r)}.
\end{gather}
The function $V\colon \mathbb{R} \to \mathbb{R}$ should be viewed as a parameter of the model and is supposed to have suf\/f\/icient growth at $\pm \infty$ such that the measure can be normalized by a constant $Z_{N, V}$. Note that choosing $V$ to be a quadratic function leads to the classic Gaussian unitary ensemble~(GUE). We refer the interested reader to \cite{handbookRM, bookAGZ, Deift, bookForrester, bookMehta, bookPasturS} for recent monographs on random matrix theory.

The determinantal nature of the point process on $\mathbb{R}$ induced by the probability measure ${\mathrm d}\mathbb P_{N, V}(\lambda) =P_{N, V}(\lambda)\,{\mathrm d}\lambda $ on $N$-point conf\/igurations is due to the square of the Vandermonde determinant appearing in \eqref{defensemble}.
In fact, there exist functions $K_{N, V}\colon \mathbb{R}^2 \to \mathbb{R}$ such that all marginal densities (also called correlation functions) can be expressed as determinants of the $N \times N$ matrix $\mathcal{K}_{N, V}(\lambda) :=(K_{N, V}(\lambda_j, \lambda_k))_{1\leq j, k\leq N}$ and of its principal minors, e.g., $P_{N,V}(\lambda)= \frac{1}{N!}\det [\mathcal{K}_{N, V}(\lambda)]$ (see, e.g., \cite[Section~2.3]{Sosh}, see also \cite[Section~4.2]{bookAGZ}, \cite[Chapters~4 and~11]{handbookRM} and references therein). Moreover, the kernels can be expressed in terms of orthogonal polynomials w.r.t.\ the measure $e^{-NV(x)}{\mathrm d}x$ on~$\mathbb{R}$. More precisely, denote by $\big(p_j^{N,V}\big)_j$ the uniquely def\/ined sequence of polynomials that satisf\/ies
\begin{gather*}
\int_{\mathbb{R}} p_j^{N,V}(x) p_k^{N,V}(x) e^{-NV(x)} \, {\mathrm d}x = \delta_{j, k}, \qquad \text{for all} \ \ j, k \geq 0,
\end{gather*}
and where $\deg\big(p_j^{N,V}\big)=j$ with a positive leading coef\/f\/icient. Then $K_{N, V}$ is given by the corresponding Christof\/fel--Darboux kernel \cite[Section~6.3]{handbookRM}
\begin{gather}\label{defCDK}
K_{N, V}(x,y) := \sum_{j=0}^{N-1} p_j^{N,V} (x) p_j^{N,V}\!(y) e^{-\frac{N}{2}(V(x)+V(y))}, \qquad \text{for} \ \ x, y \in \mathbb{R}.
\end{gather}

We are interested in deviations results for the distribution of the largest component $\lambda_{\max} := \max\{\lambda_1, \ldots, \lambda_N\}$ of $\lambda$ in the limit as $N \to \infty$. In order to be def\/inite in our subsequent discussion we now introduce the general assumptions~(\textbf{GA}) on the functions~$V$ that will be required throughout this paper. These are certainly not the most general conditions for our results to hold true but they reduce the technicalities in the proofs to a minimum.

A function $V$ is said to satisfy (\textbf{GA}) if (1)--(3) hold:
\begin{enumerate}\itemsep=0pt
\item[(1)] $V\colon \mathbb{R}\to \mathbb{R}$ is real analytic;
\item[(2)] $V'$ is strictly monotonically increasing (convexity assumption);
\item[(3)] $\lim\limits_{| x|\to\infty}V(x)=\infty$.
\end{enumerate}
\vspace{3pt}

Note that conditions (2) and (3) imply at least linear growth of $V(x)$ for $|x|\to\infty$ that suf\/f\/ices to ensure the integrability of $P_{N,V}$.

\subsection{Equilibrium measure and upper tail rate function}

One important ingredient in the analysis of the probability measure $\mathbb P_{N, V}$ on $\mathbb{R}^N$ is the functional
\begin{gather}\label{functional_I}
	I_V(\mu):=\int_{\mathbb{R}} \int_{\mathbb{R}} \log |x-y|^{-1} \,{\mathrm d}\mu(x)\,{\mathrm d}\mu(y) + \int_{\mathbb{R}} V(x)\,{\mathrm d}\mu(x)
\end{gather}
def\/ined on $\mathcal{M}_1(\mathbb{R}) := \{ \mu \colon \mu$ Borel measure on $\mathbb{R}$ with $\mu(\mathbb{R})=1 \}$. The connection can be explained heuristically as follows: Associate any $\lambda=(\lambda_1,\ldots,\lambda_N)\in\mathbb{R}^N$ with its normalized counting measure $\mu_{\lambda}:=\frac{1}{N}\sum\limits_{j=1}^N \delta_{\lambda_j}$. Then $P_{N,V}(\lambda)=Z_{N,V}^{-1} \exp \big({-}N^2 \tilde{I}_V(\mu_{\lambda})\big)$ where $\tilde{I}_V$ dif\/fers from $I_V$ by excluding the diagonal from the domain of integration in the f\/irst summand of~\eqref{functional_I}. It is now plausible that $\mathbb P_{N, V}$ concentrates around those vectors $\lambda$ for which $I_V(\mu_{\lambda})$ is close to the inf\/imum of $I_V(\mu)$ where $\mu$ ranges over $\mathcal{M}_1(\mathbb{R})$. In fact, this observation can be used to derive a large deviations principle for both the counting measure $\mu_{\lambda}$ and for $\lambda_{\max}$ under $P_{N,V}$ for $V$ satisfying~(\textbf{GA}) (see, e.g., \cite[Section~2.6]{bookAGZ} and references therein). For a def\/inition of a~large deviations principle, see \cite{bookDemboZeitouni}. However, this is not the approach of the present paper in which the analysis is based on the determinantal representation of $P_{N,V}$.

Let us summarize some well known facts about the minimization of the functional $I_V$, see, e.g., \cite[Chapter~6]{Deift}, \cite[Chapter~11]{bookPasturS} and references therein, see also \cite[Chapter~2]{Diss} for a~derivation of the facts relevant in the present paper. For a~large class of functions~$V$, including those satisfying~(\textbf{GA}), the functional $I_V$ has a unique minimizer $\mu_V$ that is called the equilibrium measure with respect to the external f\/ield~$V$. The equilibrium measure $\mu_V$ has compact support and we denote by $b_V$ the maximum of its support. From a heuristic point of view we expect~$\lambda_{\max}$ to f\/luctuate around~$b_V$. In order to describe these f\/luctuations it is known that the gradient~$\mathcal{L}_V$ of~$I_V$ at~$\mu_V$ comes into play. It is given by
\begin{gather}\label{aux_eta}
\mathcal{L}_V(x) = 2 \int_{\mathbb{R}} \log |x-y|^{-1}\, {\mathrm d}\mu_V(y) + V(x), \qquad x \in \mathbb{R}.
\end{gather}
The Euler--Lagrange equations for the above variational problem imply that there exists a real number~$l_V$ such that $\mathcal{L}_V$ equals $l_V$ on the support of $\mu_V$ and is $\geq l_V$ elsewhere. Hence the function
\begin{gather}\label{def1_eta}
\eta_V := \mathcal{L}_V - l_V
\end{gather}
is non-negative and vanishes identically on the support of the equilibrium measure. Observe that~$\eta_V$ coincides with the rate function of the large deviations principle for the upper tail of~$\lambda_{\max}$, see \cite[Section~2.6, Theorem~2.6.6]{bookAGZ}.

It is a remarkable and useful fact that in the case of strictly convex and suf\/f\/iciently smooth functions~$V$ (e.g.,~$C^3$ will do) there is an almost explicit representation for $\eta_V$: There exist~-- and this is the implicit part~-- unique reals $a_V < b_V$, called Mhaskar--Rakhmanov--Saf\/f numbers, that are uniquely def\/ined by the two equations
\begin{gather}\label{mrs_eq}
\int_a^b \frac{V'(t)}{\sqrt{(b-t)(t-a)}}\, {\mathrm d}t = 0, \qquad \int_a^b \frac{tV'(t)}{\sqrt{(b-t)(t-a)}}\, {\mathrm d}t = 2\pi.
\end{gather}
As it turns out the support of $\mu_V$ equals the interval $[a_V, b_V]$. Set
\begin{gather}\label{def_G}
 	G_V\colon \ \mathbb{R}\to\mathbb{R}, \qquad G_V(x):=\frac{1}{\pi} \int_{a_V}^{b_V} \frac{V'(x)-V'(t)}{x-t} \frac{1}{\sqrt{(b_V-t)(t-a_V)}}\,{\mathrm d}t.
\end{gather}
Observe that $G_V > 0$ by condition (2) of~(\textbf{GA}). To the right of the support of $\mu_V$ we have
\begin{gather}\label{def_eta}
	\eta_V(x) = \int_{b_V}^x \sqrt{(u-b_V)(u-a_V)} G_V(u)\,{\mathrm d}u \qquad \text{for} \ \ x > b_V.
\end{gather}

This implies in particular for $x > b_V$ that $\eta_V$ is of order $(x-b_V)^{3/2}$ near $b_V$. More precisely, for small positive values of $x-b_V$ the following holds
\begin{gather}\label{def_gamma}
\eta_V(x) = \frac{4}{3} [\gamma_V(x-b_V) ]^{3/2} ( 1 + \mathcal{O}(x-b_V) ) \qquad \text{with} \quad
	\gamma_V:= \big[\tfrac{1}{2} \sqrt{b_V-a_V} G_V(b_V)\big]^{2/3}.\!\!\!
\end{gather}
Note that the prefactor $\frac{4}{3}$ has no signif\/icance other than the standard convention that \mbox{$\gamma_V=1$} for $V(x) = x^2/2$. Secondly, we remind the reader that the equilibrium measure can also be expressed in terms of the just def\/ined quantities. Indeed, on its support~$\mu_V$ is given by ${\mathrm d} \mu_V (x) = \frac{1}{2\pi}\sqrt{(b_V-x)(x-a_V)} G_V(x)\, {\mathrm d} x$.

\subsection[Fluctuations: Tracy--Widom law, large and moderate deviations principles]{Fluctuations: Tracy--Widom law, large and moderate\\ deviations principles}

After the brief review of the equilibrium measure we are now ready to discuss the f\/luctuations of $\lambda_{\max}$ around $b_V$. They are of order $N^{-2/3}$ and, appropriately rescaled (here we also need the just def\/ined~$\gamma_V$), they are asymptotically described by the celebrated ($\beta=2$) Tracy--Widom distribution~$F_{\rm TW}$~\cite{TracyWidom94}, i.e.,
\begin{gather}\label{tw}
	\lim_{N\to\infty} \mathbb{P}_{N,V}\left( \frac{\lambda_{\max}-b_V}{(\gamma_V N^{2/3})^{-1}} \leq s\right)	= F_{\rm TW}(s)
\end{gather}
uniformly for $s\in \mathbb{R}$ (see, e.g., \cite[Section~6.3]{handbookRM}, \cite[Chapter~6]{DeiftGioev}, \cite[Section~13.1]{bookPasturS}
and references therein). This result can be viewed as an analogue of the central limit theorem for the arithmetic mean of $N$ independent and identically distributed random variables. Note that the role of the normal distribution is taken by the Tracy--Widom distribution and that the power law of the f\/luctuations has changed from~$N^{-1/2}$ to~$N^{-2/3}$. As it is the case for the classical CLT it is natural to ask for corresponding deviations results. Roughly speaking this means to describe how fast the tail probabilities tend to zero if~$s$ is not kept f\/ixed as in~\eqref{tw} but is allowed to grow with~$N$. In this paper we are only concerned with the upper tail.

Before we formulate our precise deviations results for $\lambda_{\max}$ in Theorems~\ref{thm_O} and~\ref{theorem_sl} we begin by stating our results in a weaker but possibly more familiar form that is related to the large deviations principles introduced by Varadhan (see, e.g.,~\cite{bookDemboZeitouni}). We will show in Corollary~\ref{coro_univ} how to derive these from our main results.

Recall the def\/inition of $\eta_V$ in \eqref{def1_eta} (see also~\eqref{def_eta}). Then we have for $t > b_V$:
\begin{gather}\label{ldp}
\frac{1}{N} \log \mathbb{P}_{N,V}( \lambda_{\max}>t) = - \eta_V(t) -\frac{\log N}{N} + \mathcal{O}\left(\frac{1}{N}\right),
\end{gather}
where the $\mathcal{O}$-term is uniform for $t$ in compact subsets of $(b_V, \infty)$. Formula \eqref{ldp} implies in particular a large deviations principle for $\lambda_{\max}$ with speed $N$ and rate function $J_V(t) = \eta_V(t)$ if $t \geq b_V$ and $J_V(t) = \infty$ otherwise. Indeed, we obtain that $\limsup_N \frac 1N \log \mathbb{P}_{N,V}( \lambda_{\max} \leq t) = - \infty$ for any $t < b_V$ applying the large deviations principle for the empirical measure of the eigenvalues, see \cite[equation~(2.6.30)]{bookAGZ}. Furthermore, one can remark that together with the assertion $\lim_N \frac 1N \log \mathbb{P}_{N,V}( \lambda_{\max} \geq t) = - \eta_V(t)$ for all $t\geq b_V$, Theorem~4.1.11 in~\cite{bookDemboZeitouni} allows us to derive a large deviations principle from the limiting behavior of probabilities for a basis of topology, see also~\cite{ERSY}. Observe, that for any $V$ satisfying condition (\textbf{GA}), growth-condition~(2.6.2) and Assumption~2.6.5 in \cite[Theorem~2.6.6]{bookAGZ} are fulf\/illed. The latter follows from Lemmas~4.5 and~4.6 in~\cite{JohT} (for discrete Coulomb gases, but the proof for a continuous Coulomb gas is essentially the same). Hence~\eqref{ldp} reproves Theorem~2.6.6 in~\cite{bookAGZ} under stronger assumptions but provides more information on the higher order terms.

Large deviations principles for extremal values have already been proved in a much more general setting of mean f\/ield models with Coulomb gas interactions that do not necessarily possess the structure of determinantal point processes, (see \cite{BDG}, \cite[Section~2.6.2]{bookAGZ}, \cite[Section~4]{JohT}, \cite{BoGaGu, CE, Feral}). Note that these results do not provide rates of convergence as presented in~\eqref{ldp}. Another class of repulsive particle systems that is not determinantal but can be expressed as an average of determinantal ones by a~stochastic linearization procedure has been introduced in~\cite{GoVe}. For such ensembles a result comparable to~\eqref{ldp} has been obtained in~\cite{KrVe}. Recently in~\cite{Fanny} the author proves a large deviations principle for the largest eigenvalue of Wigner matrices without Gaussian tails, namely such that the distribution tails $P(|X_{1,1}| >t)$ and $P(|X_{1,2}| >t)$ behave like $e^{-b t^{\alpha}}$ and $e^{-at^{\alpha}}$ respectively for some $a,b \in (0, \infty)$ and $\alpha \in (0,2)$. The large deviations principle is of speed~$N^{\alpha/2}$ and with an explicit rate function depending only on the tail distributions of the~$X_{i,j}$.

We turn to the regime of moderate deviations. Theorem~\ref{thm_O} below implies for any $\alpha \in \big(0, \frac{2}{3}\big)$ moderate deviations principles for the rescaled random variable $\widetilde{\lambda}_{\max}/N^{\alpha}$ with
\begin{gather*}
\widetilde{\lambda}_{\max} := (\lambda_{\max}-b_V)\gamma_V N^{2/3}
\end{gather*}
as appearing in \eqref{tw}. Here the speed is $N^{\frac{3}{2}\alpha}$ and the rate function is $J(s) := \frac{4}{3} s^{3/2}$. This can be seen from the following corollary of Theorem~\ref{thm_O}:
\begin{gather}\label{mdp}
\frac{1}{s^{3/2}} \log \mathbb{P}_{N,V}\big( \widetilde{\lambda}_{\max} > s\big)
	 = -\frac{4}{3}-\frac{\log (16\pi s^{3/2})}{s^{3/2}} + \mathcal{O}\left( \frac{s}{N^{2/3}}\right) + \mathcal{O}\left( \frac{1}{s^3}\right),
\end{gather}
where the $\mathcal{O}$-terms are uniform for $s \in \big[1, N^{2/3}\big]$, thus connecting the Tracy--Widom regime with the regime of large deviations.

In the regime of moderate deviations less is known. Upper and lower bounds on the left hand side of \eqref{mdp} of the correct order were shown in~\cite{LedouxRider} for Hermitian $\beta$-ensembles that are determinantal for $\beta = 2$ only and agree in this case with the Gaussian unitary ensemble. A~result of the form~\eqref{mdp} has been proved in~\cite{KrVe} for the class of repulsive particle systems introduced in~\cite{GoVe}. Finally we refer the reader to the moderate deviations results in~\cite{hanna1, hanna2, hanna3} on certain statistics of eigenvalues for Wigner matrices and to the moderate deviations results in~\cite{BDMMZ, Merkl1, Merkl2} on combinatorial models that are closely related to random matrix theory.

\subsection{Precise deviations results I: moderate and large deviations}

As already mentioned, all the above results follow from our precise deviations results in the sense of Cram\'er and Petrov. Here the goal is to identify the leading order description of \mbox{$\mathbb{P}_{N,V}( \lambda_{\max}>t)$} resp.~$\mathbb{P}_{N,V}(\widetilde{\lambda}_{\max} > s)$, i.e., to identify functions $\mathcal{F}_{N,V}$ resp.~$\widetilde{\mathcal{F}}_{N,V}$ such that $\mathbb{P}_{N,V}(\lambda_{\max}>t)/\mathcal{F}_{N,V}(t)$ resp.~$\mathbb{P}_{N,V}( \widetilde{\lambda}_{\max} > s)/\widetilde{\mathcal{F}}_{N,V}(s)$ tend to $1$ as $N \to \infty$ in suitable subsets of the~$(t,N)$ resp.~$(s, N)$ plane. For example, for any bounded subset $B \subset [0, \infty)$ we learn from the Tracy--Widom law~\eqref{tw} that for $(s, N) \in B \times \mathbb N$ we have
\begin{gather}\label{precisetw}
\lim_{N \to \infty} \frac{\mathbb{P}_{N,V}\big( \widetilde{\lambda}_{\max} > s\big)}{1-F_{\rm TW}(s)} =1.
\end{gather}
By this we mean that for any sequence $(s_N, N)_N$ in $B \times \mathbb N$ relation~\eqref{precisetw} holds with $s$ being replaced by~$s_N$. Observe that due to the fast decay of $1-F_{\rm TW}(s)$ for $s \to \infty$ (see~\eqref{astw} below) even optimal control on the rate of convergence in~\eqref{tw} would only allow to extend this result to values of $s$ that grow at most of order $(\log N)^{2/3}$ with~$N$. In this paper we are able to show that, in fact, \eqref{precisetw} holds true for $s = o \big( N^{4/15}\big)$ but generically (in~$V$) in no larger domain.

Our f\/irst main result allows us to obtain simultaneously the leading order description of the upper tail probabilities $\mathbb{P}_{N,V}( \lambda_{\max}>t)$ resp.~$\mathbb{P}_{N,V}( \widetilde{\lambda}_{\max} > s)$ in the regimes of large resp.\ moderate deviations. To state it conveniently, we introduce the function
\begin{gather}\label{def_FNV}
\mathcal{F}_{N,V}(t) := \frac{b_V-a_V}{8\pi} \frac{e^{-N\eta_V(t)}}{N(t-b_V)(t-a_V)\eta_V'(t)} \qquad \text{for} \ \ t > b_V.
\end{gather}

\begin{Theorem}\label{thm_O} Assume that $V$ satisfies $({\bf GA})$ and recall the notation introduced above. Then the upper tail probability satisfies for all $t>b_V$ the relation
\begin{gather}\label{formel_O}
	\mathbb{P}_{N,V}( \lambda_{\max}>t) = \mathcal{F}_{N,V}(t) \left( 1+\mathcal{O}\left( \frac{1}{N(t-b_V)^{3/2}}\right) \right),
\end{gather}
with a uniform $\mathcal{O}$-term for $t$ in bounded subsets of $(b_V,\infty)$.
\end{Theorem}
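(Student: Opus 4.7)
The plan is to combine the Fredholm expansion of $\mathbb{P}_{N,V}(\lambda_{\max}>t)$ afforded by the determinantal structure with strong asymptotics of the diagonal Christoffel--Darboux kernel $K_{N,V}(x,x)$ to the right of $b_V$, and then to read off $\mathcal{F}_{N,V}$ via a single integration by parts applied to an exponentially decaying integrand. First, for the integral operator $\mathcal{K}_t$ on $L^{2}((t,\infty))$ with kernel $K_{N,V}$ we have $\mathbb{P}_{N,V}(\lambda_{\max}>t)=1-\det(I-\mathcal{K}_t)$. Writing out the first two terms of the Fredholm expansion and using the reproducing identity $\int K_{N,V}(x,y)^{2}dy\le K_{N,V}(x,x)$ (equivalently, the factorial-moment formulas for DPPs) yields
\begin{gather*}
0\le I_N(t)-\mathbb{P}_{N,V}(\lambda_{\max}>t)\le \tfrac12 I_N(t)^{2},\qquad I_N(t):=\int_t^{\infty}K_{N,V}(x,x)\,{\mathrm d}x.
\end{gather*}
This reduces the task to the asymptotic evaluation of $I_N(t)$, the correction being quadratic in $I_N(t)$ and therefore much smaller than the desired error envelope thanks to the exponential decay of $I_N(t)$.

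Next, I would invoke the strong asymptotics of the diagonal kernel outside the support, as established in the companion paper via Deift--Zhou steepest descent for the associated Riemann--Hilbert problem (with outer parametrix away from the edge and Airy parametrix near $b_V$):
\begin{gather*}
K_{N,V}(x,x)=A(x)\,e^{-N\eta_V(x)}\bigl(1+\mathcal{O}(N^{-1})\bigr),\qquad A(x):=\frac{b_V-a_V}{8\pi(x-b_V)(x-a_V)},
\end{gather*}
uniformly for $x$ in compact subsets of $(b_V,\infty)$. The exponent $-N\eta_V(x)$ emerges from combining the variational identity $2\int\log(x-y)\,{\mathrm d}\mu_V(y)-V(x)=-\eta_V(x)-l_V$ with the normalization of the orthogonal polynomials, while the explicit prefactor $A(x)$ comes from the outer parametrix and can be sanity-checked in the GUE case $V(x)=x^{2}/2$ (giving $A(x)=1/(2\pi(x^{2}-4))$).

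A single integration by parts, available because $\eta_V'>0$ on $(b_V,\infty)$, then produces
\begin{gather*}
\int_t^{\infty}A(x)e^{-N\eta_V(x)}\,{\mathrm d}x=\frac{A(t)e^{-N\eta_V(t)}}{N\eta_V'(t)}+\int_t^{\infty}\Bigl(\frac{A}{N\eta_V'}\Bigr)'(x)\,e^{-N\eta_V(x)}\,{\mathrm d}x,
\end{gather*}
and the boundary term is exactly $\mathcal{F}_{N,V}(t)$. A second integration by parts shows that the remainder, divided by the leading term, is controlled by $(A/\eta_V')'(t)/(NA(t)\eta_V'(t))$. Using $\eta_V(x)\asymp(x-b_V)^{3/2}$, $\eta_V'(x)\asymp(x-b_V)^{1/2}$ and $\eta_V''(x)\asymp(x-b_V)^{-1/2}$ near $b_V$ (a direct consequence of \eqref{def_eta} and \eqref{def_gamma}), this ratio is $\mathcal{O}((N(t-b_V)^{3/2})^{-1})$, uniformly for $t$ in bounded subsets of $(b_V,\infty)$. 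The kernel-asymptotics error $\mathcal{O}(N^{-1})$ from the previous step is absorbed into the same envelope, and the Fredholm correction $\tfrac12 I_N(t)^2$ is even smaller thanks to the extra exponential factor.

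The main obstacle will be the uniformity of the kernel asymptotics down to the edge: the envelope $\mathcal{O}(1/(N(t-b_V)^{3/2}))$ is designed to allow $t-b_V$ to shrink with $N$ (it is informative exactly in the range $N(t-b_V)^{3/2}\to\infty$), and this is precisely where the outer parametrix underlying the display above ceases to apply and must be matched with the Airy parametrix. Producing a \emph{uniform} error of the sharp order $(N(t-b_V)^{3/2})^{-1}$ through that transition is where essentially all of the technical weight sits, and it is for this reason that the argument leans on the precise kernel estimates of the companion paper rather than on purely outer asymptotics.
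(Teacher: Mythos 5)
Your architecture matches the paper's: reduce via the Fredholm/inclusion--exclusion expansion to the trace $I_N(t)=\int_t^\infty K_{N,V}(x,x)\,{\mathrm d}x$, feed in the outside-the-bulk Christoffel--Darboux asymptotics from the companion paper [KSSV], and evaluate the resulting Laplace-type integral. Two of your choices differ in detail but are sound: you control the higher Fredholm terms by a Bonferroni-type two-sided bound $0\le I_N(t)-\mathbb P_{N,V}(\lambda_{\max}>t)\le\tfrac12 I_N(t)^2$ rather than the paper's Hadamard estimate $\det(K(x_i,x_j))\le\prod K(x_i,x_i)$ applied term-by-term (your bound is actually tighter, at the cost of a slightly more delicate derivation), and you evaluate the Laplace integral by two integrations by parts rather than the paper's substitution $u=\eta_V(x)-\eta_V(t)$ followed by the mean value theorem on $f(u)=\big((x(u)-b_V)(x(u)-a_V)\eta_V'(x(u))\big)^{-1}$; these are interchangeable.

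There is, however, one genuine gap. You integrate by parts over $(t,\infty)$ using the displayed kernel asymptotics, but those asymptotics, as you yourself state, are uniform only for $x$ in \emph{compact} (or bounded) subsets of $(b_V,\infty)$. The exponent $-N\eta_V(x)$ in the error term and the prefactor $A(x)$ are simply not controlled by [KSSV] uniformly as $x\to\infty$, so the boundary term at $\infty$ and the remainder integral over $(M,\infty)$ are not yet justified. The paper handles this by splitting at a fixed $M$ and bounding $\int_M^\infty K_{N,V}(x,x)\,{\mathrm d}x$ separately via a log-gas tail estimate on the one-point density ($\rho_N(x)\le\hat D e^{-N\tau x}$ for $x\ge L$), showing that contribution is $\mathcal F_{N,V}(t)\cdot\mathcal O(N^{-1})$ uniformly in $t$. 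You need an analogous step. Two minor points: your displayed kernel error is written as $\mathcal O(N^{-1})$, whereas the uniform statement of [KSSV] used here is $\mathcal O\big(1/(N(x-b_V)^{3/2})\big)$ (you do acknowledge this in your closing paragraph, but the display is misleading and, used as written, would yield the wrong error envelope near the edge); and your formula for the relative size of the second-order boundary term, $\big(A/\eta_V'\big)'(t)/\big(NA(t)\eta_V'(t)\big)$, has a spurious extra factor of $\eta_V'(t)$ in the denominator --- the correct ratio is $\big(A/\eta_V'\big)'(t)/\big(NA(t)\big)$, which is what actually produces the stated $\mathcal O\big(1/(N(t-b_V)^{3/2})\big)$.
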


Observe that Theorem \ref{thm_O} immediately implies \eqref{ldp} with the uniformity of the $\mathcal{O}$-term claimed there. We would like to point out that for \eqref{formel_O} the uniformity of the $\mathcal{O}$-term is stated not only for compact but for bounded subsets of $(b_V,\infty)$ extending the region of validity up to~$b_V$. Note, however, that there exists a positive number $C_V$ depending on the constant in the $\mathcal{O}$-term such that for $0<t-b_V\leq C_V N^{-2/3}$ statement \eqref{formel_O} already follows from the boundedness of $\mathbb{P}_{N,V}( \lambda_{\max}>t)/\mathcal{F}_{N,V}(t)$ which is easy to derive using \eqref{def_G}--\eqref{def_gamma}. This lack of informative value of~\eqref{formel_O} for these values of $t$ is no problem since they belong to the Tracy--Widom regime and~\eqref{precisetw} with $[0, \gamma_V C_V] \subset B$ f\/ills the gap.

Next we turn to the regime of moderate deviations $N^{-2/3} \ll t \ll 1$, where by \eqref{formel_O} we have $\mathbb{P}_{N,V}( \lambda_{\max}>t)/\mathcal{F}_{N,V}(t) \to 1$ as $N \to \infty$. It is instructive to translate this result to the rescaled variable $\widetilde{\lambda}_{\max}$. Since
\begin{gather}\label{as0}
\mathbb{P}_{N,V}\big( \widetilde{\lambda}_{\max} > s\big) = \mathbb{P}_{N,V} ( \lambda_{\max}>t(s) ) \qquad \text{with} \quad t(s): = b_V + \frac{s}{\gamma_V N^{2/3}}
\end{gather}
we only need to evaluate $\mathcal{F}_{N,V}(t(s))$. Using again \eqref{def_G}--\eqref{def_gamma} and the assumed real analyticity of~$V$ one obtains for positive $s = o\big( N^{2/3}\big)$, i.e., in particular in the regime of moderate deviations, that
\begin{gather}\label{as1}
\mathcal{F}_{N,V}(t(s)) = \frac{e^{-N \eta_V (t(s))}}{16 \pi s^{3/2}} \left[ 1 + \mathcal{O} \left( \frac{s}{N^{2/3}}
\right) \right] \qquad \text{and}\\
\label{as2}
N \eta_V (t(s)) = \frac{4}{3}s^{3/2} + \sum_{j=1}^{\infty} d_{j, V} \frac{s^{j + \frac{3}{2}}}{N^{\frac{2}{3} j}} =
 \frac{4}{3}s^{3/2} + \mathcal{O} \left( \frac{s^{5/2}}{N^{2/3}} \right)
\end{gather}
for some sequence $(d_{j, V})_{j \geq 1}$ of real numbers depending on $V$.

From these formulas \eqref{mdp} is immediate, at least for $s \in \big[C, c N^{2/3}\big]$ where the positive numbers $c$, $C$ depend on the constants in the $\mathcal{O}$-terms of \eqref{formel_O} and \eqref{as1}. To extend~\eqref{mdp} to all of $\big[1, N^{2/3}\big]$ one may use the monotonicty of $\mathbb{P}_{N,V}( \widetilde{\lambda}_{\max} > s)$ in $s$ for the lower end and for the upper end one shows that $s^{3/2} e^{N \eta_V (t(s))} \mathcal{F}_{N,V}(t(s))$ is bounded away from~$0$.

We return to the leading order description for $\mathbb{P}_{N,V}( \widetilde{\lambda}_{\max} > s)$ in the regime of moderate deviations. The f\/irst observation is that combining \eqref{formel_O}--\eqref{as2} leads to a series representation for the upper tail that is the exact analogue to the Cram\'er series for sums of independent variables~\cite{Cramer}, see also \cite[Section~5.8]{bookPetrov}. Secondly, in order to determine the leading order we only need to keep those terms of the series in \eqref{as2} that do not vanish as $N$ becomes large. A~computation shows that for any $k \in \mathbb N_{0}$ and positive~$s$ we have in the limit $N \to \infty$
\begin{gather}\label{as3}
N \eta_V (t(s)) = \widetilde{\eta}_{V, k}(s, N) + o (1) \qquad \text{for} \ \ s =o(N^{\alpha_k}) ,
\end{gather}
where
\begin{gather*}
\alpha_k := \frac{2}{3} - \frac{2}{2k+5} \qquad \text{and} \qquad \widetilde{\eta}_{V, k}(s, N) := \frac{4}{3}s^{3/2} + \sum_{j=1}^{k} d_{j, V} \frac{s^{j + \frac{3}{2}}}{N^{\frac{2}{3} j}} .
\end{gather*}
Note that $\widetilde{\eta}_{V, 0}(s, N) = \frac{4}{3} s^{3/2}$ does not depend on $N$ and is just the rate function $J$ introduced above~\eqref{mdp}. The results of our discussion are summarized in statements a) and b) of the following

\begin{Corollary}\label{coro_univ}
Assume that $V$ satisfies $({\bf GA})$ and recall the notation introduced above.
\begin{enumerate}\itemsep=0pt
\item[$a)$] {\rm Deviations principles (large and moderate).} Relations~\eqref{ldp} and \eqref{mdp} hold with the uniformity of the $\mathcal{O}$-terms stated there.

\item[$b)$] {\rm Precise deviations (large and moderate).}
\begin{itemize}\itemsep=0pt
\item[$i)$] For any $q \in (0, \infty)$ and any sequence of positive reals $(p_N)_N$ satisfying $p_N < q$ and $\lim\limits_{N\to \infty} p_N N^{2/3} = \infty$ we have
\begin{gather*}
\mathbb{P}_{N,V}( \lambda_{\max}>t) = \mathcal{F}_{N,V}(t) (1 + o(1))
\end{gather*}
uniformly in $t \in [b_V+p_N, b_V+q]$ as $N\to \infty$.
\item[$ii)$] For any $k \in \mathbb N_0$ and any sequences of positive reals $(\widetilde{p}_N)_N$, $(\widetilde{q}_N)_N$ satisfying $\widetilde{p}_N < \widetilde{q}_N$ and $\widetilde{p}_N \to \infty$, $\widetilde{q}_N N^{-\alpha_k} \to 0$ for $N \to \infty$, we have
\begin{gather*}
\mathbb{P}_{N,V}( \widetilde{\lambda}_{\max} > s) = \frac{\exp [-\widetilde{\eta}_{V, k}(s, N)]}{16 \pi s^{3/2}}(1 + o(1))
\end{gather*}
uniformly in $s \in [\widetilde{p}_N, \widetilde{q}_N]$ as $N\to \infty$.
\end{itemize} \vspace{7pt}

\item[$c)$] {\rm Range of validity of the Tracy--Widom law.}
For any sequence of positive reals $(\widetilde{q}_N)_N$ satisfying $\lim\limits_{N\to \infty} \widetilde{q}_N N^{-4/15} = 0$ we have
\begin{gather*}
\mathbb{P}_{N,V}\big( \widetilde{\lambda}_{\max} > s\big) = (1 - F_{\rm TW}(s)) (1 + o(1))
\end{gather*}
uniformly in $s \in [0, \widetilde{q}_N]$ as $N\to \infty$.
\end{enumerate}
\end{Corollary}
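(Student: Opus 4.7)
My plan is to derive all three parts of the corollary from Theorem~\ref{thm_O} together with the deterministic expansions \eqref{as0}--\eqref{as2} and the Tracy--Widom limit \eqref{tw}; the only additional ingredient required is the standard tail asymptotic $1 - F_{\rm TW}(s) = (1+o(1))\,e^{-(4/3)s^{3/2}}/(16\pi s^{3/2})$ as $s\to\infty$ (equation~\eqref{astw} in the paper). For part~(a), I would take logarithms of \eqref{formel_O}, insert the explicit formula \eqref{def_FNV}, and divide by $N$: this isolates the dominant contributions $-N\eta_V(t)$ and $-\log N$, while the remaining terms are bounded logs contributing $\mathcal{O}(1/N)$ uniformly on compact subsets of $(b_V,\infty)$, giving \eqref{ldp}. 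For \eqref{mdp}, I would pass to the rescaled variable via \eqref{as0}, combine \eqref{formel_O} with \eqref{as1}, take logarithms, insert \eqref{as2}, and divide by $s^{3/2}$. The extremes of $[1,N^{2/3}]$ are handled, as already indicated in the text, by monotonicity of $s\mapsto\mathbb{P}_{N,V}(\widetilde{\lambda}_{\max}>s)$ near $s=1$ and by the uniform lower bound on $s^{3/2}\,e^{N\eta_V(t(s))}\mathcal{F}_{N,V}(t(s))$ near $s=N^{2/3}$.

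Part~(b)(i) is immediate from \eqref{formel_O}: the error $\mathcal{O}(1/(N(t-b_V)^{3/2}))$ is at most $\mathcal{O}((p_N N^{2/3})^{-3/2})=o(1)$ uniformly on $[b_V+p_N,b_V+q]$ under the hypothesis $p_N N^{2/3}\to\infty$. For (b)(ii), I would combine \eqref{formel_O} with \eqref{as1} after the rescaling \eqref{as0} to get
\[
\mathbb{P}_{N,V}(\widetilde{\lambda}_{\max}>s) = \frac{e^{-N\eta_V(t(s))}}{16\pi s^{3/2}}\bigl(1 + \mathcal{O}(s/N^{2/3}) + \mathcal{O}(s^{-3/2})\bigr),
\]
and then replace $N\eta_V(t(s))$ by its truncation $\widetilde{\eta}_{V,k}(s,N)$. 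The tail of the series \eqref{as2} is of order $s^{k+5/2}/N^{2(k+1)/3}$, which is $o(1)$ precisely when $s=o(N^{\alpha_k})$; this is the origin of the threshold $\alpha_k=2/3-2/(2k+5)$. Under the hypotheses on $\widetilde p_N,\widetilde q_N$, the $s^{-3/2}$ error vanishes because $\widetilde p_N\to\infty$, and the $s/N^{2/3}$ error vanishes because $\alpha_k<2/3$, so all corrections combine into a single $(1+o(1))$ prefactor.

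Part~(c) is the main obstacle, because it requires stitching the Tracy--Widom regime (bounded $s$) to the moderate deviations regime (growing $s$). I would introduce a slowly growing threshold $M_N\to\infty$ and split $[0,\widetilde q_N] = [0,M_N]\cup[M_N,\widetilde q_N]$. On the first piece, the uniformity of \eqref{tw} gives $\mathbb{P}_{N,V}(\widetilde{\lambda}_{\max}>s)-(1-F_{\rm TW}(s))=\mathcal{O}(\varepsilon_N)$ where $\varepsilon_N\to 0$ denotes the uniform error from \eqref{tw}; choosing $M_N$ so that $1-F_{\rm TW}(M_N)\gg\varepsilon_N$ (always possible, e.g.\ $M_N:=F_{\rm TW}^{-1}(1-\sqrt{\varepsilon_N})$) turns this absolute error into a relative error and yields $\mathbb{P}_{N,V}(\widetilde{\lambda}_{\max}>s)=(1-F_{\rm TW}(s))(1+o(1))$ uniformly on $[0,M_N]$. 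On the second piece, the hypothesis $\widetilde q_N = o(N^{4/15}) = o(N^{\alpha_0})$ matches exactly the $k=0$ case of part~(b)(ii) applied with $\widetilde p_N=M_N$, giving $\mathbb{P}_{N,V}(\widetilde{\lambda}_{\max}>s)=(1+o(1))\,e^{-(4/3)s^{3/2}}/(16\pi s^{3/2})$; matching this against the Tracy--Widom tail asymptotic of equation~\eqref{astw} then completes the proof. The only subtlety is the choice of $M_N$: it must be large enough for the tail asymptotic of $1-F_{\rm TW}$ and for part~(b)(ii) to be effective on $[M_N,\widetilde q_N]$, yet small enough that the uniform TW error is dominated by $1-F_{\rm TW}(M_N)$ on $[0,M_N]$; both requirements are compatible because $M_N\to\infty$ can be taken arbitrarily slow.
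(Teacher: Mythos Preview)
Your proposal is correct and follows essentially the same route as the paper: parts (a) and (b) are exactly the deductions from Theorem~\ref{thm_O} and \eqref{as0}--\eqref{as3} that the paper spells out in the discussion preceding the corollary, and for part~(c) the paper likewise combines \eqref{formel_O}--\eqref{as2} with the Tracy--Widom tail asymptotic \eqref{astw} and invokes the Tracy--Widom limit \eqref{precisetw} to handle the bounded-$s$ range. The only cosmetic difference is that the paper does not introduce an explicit threshold $M_N$ but instead writes a single formula carrying both error terms $\mathcal{O}\big(s^{5/2}/N^{2/3}\big)$ and $\mathcal{O}\big(s^{-3/2}\big)$ and remarks that \eqref{precisetw} ``deals with'' the latter; your splitting makes the same idea more transparent.
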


\begin{Remark}
The result in (c) can be viewed as an analogue of the Cram\'er--Petrov result for the arithmetic mean of $N$ independent and identically distributed random variables $(X_i)_i$ with zero mean and variance~1, see \cite[Theo\-rem~5.23]{bookPetrov}. Here for any sequence of positive reals $(a_N)_N$ satisfying $\lim\limits_{N \to \infty} a_N N^{-1/6} =0$ one has
\begin{gather*}
P\left( N^{-1/2} \sum_{i=1}^N X_i > s\right) = (1- \Phi(s))(1 +o(1))
\end{gather*}
uniformly in $s \in [0, a_N]$ as $N \to \infty$, where $\Phi(t)$ is the distribution function of a standard normal distributed random variable.
\end{Remark}

\begin{proof} We are only left to show statement c). Observe that $\alpha_0 = 2/3-2/5 = 4/15$. Therefore it follows from \eqref{formel_O}--\eqref{as3} that for all $s \in (0, \widetilde{q}_N]$:
\begin{gather*}
\mathbb{P}_{N,V}( \widetilde{\lambda}_{\max} > s) = \frac{e^{- \frac{4}{3}s^{3/2}}}{16 \pi s^{3/2}}
\left[ 1 + \mathcal{O} \left( \frac{s^{5/2}}{N^{2/3}} \right) + \mathcal{O} \left( \frac{1}{s^{3/2}} \right)\right] .
\end{gather*}
Using in addition the asymptotics of the Tracy--Widom distribution (see, e.g., \cite[equa\-tions~(1) and~(25)]{BBDF}, cf.~\cite[Chapter~9]{handbookRM} and references therein)
\begin{gather}\label{astw}
1-F_{\rm TW}(s)	=\frac{e^{-\frac{4}{3}s^{3/2}}}{16\pi s^{3/2}} \left[ 1 + \mathcal{O} \left( \frac{1}{s^{3/2}} \right)\right],
\end{gather}
for $s \to \infty$ and \eqref{precisetw} to deal with the $\mathcal{O} \big(s^{-3/2}\big)$-term, the claim follows.
\end{proof}

To the best of our knowledge there are three deviations results in the literature of comparable precision for models that have the Tracy--Widom distribution as their limit law. The f\/irst two are concerned with the upper tail in the moderate regime: Firstly, in~\cite{Merkl1} the leading order description is given for the length of the longest increasing subsequence of a random permutation. Secondly, for the largest particles from ensembles that were introduced in~\cite{GoVe} precise deviations were proved in \cite{KrVe} with slightly worse $\mathcal{O}$-terms that are due to the averaging procedure that is used. The third result is contained in \cite[see equation~(162)]{DeItKra} and deals with the lower tail of the distribution of the largest eigenvalue of the Laguerre Unitary Ensemble in the regime of moderate and large deviations.

\begin{Remark}
An important topic of random matrix theory is the question of universality. A~good example for a universality result is~\eqref{tw}. After an appropriate linear rescaling that involves only two $V$-dependent numbers $b_V$ and~$\gamma_V$, the distribution of the largest eigenvalue tends in the limit $N \to \infty$ to the Tracy--Widom distribution that has no $V$-dependency whatsoever. If one considers large deviations principles one sees a transition from universal to non-universal behavior. Based on the same rescaling as in the Tracy--Widom regime, \eqref{mdp} implies moderate deviations principles with universal rate function $J(s) = \frac{4}{3}s^{3/2}$. In contrast, the rate function~$\eta_V$ of the large deviations principle depends fully on~$V$.

This transition becomes even more elaborate when one considers precise deviations. Again there is no universality in the regime of large deviations. However, in the regime of moderate deviations there is an inf\/inite cascade of regions where the level of $V$-dependency changes. More precisely, the leading order description of $\mathbb{P}_{N,V}( \widetilde{\lambda}_{\max} > s)$ is still universal for $s = o\big(N^{4/15}\big)= o\big(N^{\alpha_0}\big)$. For $N^{\alpha_{k-1}} \leq s \ll N^{\alpha_k}$ the leading order description depends on the $k$-tuple of $V$-dependent numbers $(d_{1, V}, \ldots, d_{k, V})$ and this is how universality fades out as $k \to \infty$, i.e., when approaching the regime of large deviations. This transition can also be observed for the class of repulsive interacting particles introduced in~\cite{GoVe} since the precise deviations results there are similar to ours \cite[Remark~9]{KrVe}. However, in the regime of large deviations the leading order description has not yet been fully understood for those models and a new source of non-universality has been conjectured there \cite[Remark~7]{KrVe}.
\end{Remark}

\subsection{Precise deviations results~II: superlarge deviations}

The task of providing the leading order description for the upper tail of $\lambda_{\max}$ would be fully completed by Corollary~\ref{coro_univ} if we were allowed to choose $q= \infty$ in statement~b)~i). It therefore remains to extend the result of Theorem~\ref{thm_O} to unbounded subsets of $(b_V, \infty)$. In the case of sums of independent variables the corresponding question has been raised under the heading of ``superlarge deviations'' (see, e.g.,~\cite{BorMog}) and we will also use this terminology.

Our second main result states that under additional assumptions on~$V$ the leading order description of the upper tail remains unchanged also in the superlarge regime. In order to formulate our result we introduce new conditions on~$V$ that concern both the size of the region on that $V$ can be extended analytically and the growth of $\operatorname{Re}(V(z))$ as $\operatorname{Re}(z) \to \infty$ on this region.

A function $V$ is said to satisfy ({\bf GA})$_{\infty}$ if (1)--(2) hold:
 \begin{enumerate}\itemsep=0pt
 \item[(1)] $V$ satisf\/ies ({\bf GA}).
\item[(2)] There exists $n\in \mathbb{N}$ and $x_0 > 0$ such that $V$ has an analytic extension on
\begin{gather*}
	\mathcal{U}(n,x_0):=\left\{ z\in\mathbb{C}\,|\, \operatorname{Re}(z) > x_0, \, |\operatorname{Im}(z)| < \frac{1}{(\operatorname{Re}(z))^n} \right\}.
\end{gather*}
Moreover, there exists a constant $d_V>0$ such that for all $z\in\mathcal{U}(n,x_0)$:
\begin{gather*}
	\operatorname{Re}(V(z))\geq d_V \operatorname{Re}(z).
\end{gather*}
\end{enumerate}

Our result on superlarge deviations, which appears to be the f\/irst in the realm of random matrix theory, interacting particle systems and related combinatorial models, reads:

\begin{Theorem}\label{theorem_sl}
Assume that $V$ satisfies $({\bf GA})_\infty$ together with $\frac{V''(x)}{V'(x)^2}=\mathcal{O}(1)$ for $x\to\infty$. Recall the definition of $\mathcal{F}_{N,V}$ in~\eqref{def_FNV}. Then, for sufficiently large values of~$N$,
\begin{gather*}
	\mathbb{P}_{N,V}( \lambda_{\max}>t) = \mathcal{F}_{N,V}(t)
		\left( 1+\mathcal{O}\left( \frac{1}{N}\right) \right),
\end{gather*}
with a uniform $\mathcal{O}$-term for $t \in [b_V+1,\infty)$.
\end{Theorem}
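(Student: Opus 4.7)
The plan is to reduce the upper tail probability to the integral of the diagonal Christoffel--Darboux kernel and then control that kernel asymptotically via an extended Riemann--Hilbert steepest-descent analysis. Starting from the Fredholm expansion
\begin{gather*}
\mathbb{P}_{N,V}(\lambda_{\max}>t)=\sum_{k=1}^{\infty}\frac{(-1)^{k+1}}{k!}\int_{(t,\infty)^k}\det[K_{N,V}(x_i,x_j)]_{1\leq i,j\leq k}\,dx_1\cdots dx_k,
\end{gather*}
the Bonferroni-type sandwich $\mathbb{E}[\#]-\mathbb{E}\binom{\#}{2}\leq \mathbb{P}(\lambda_{\max}>t)\leq\mathbb{E}[\#]$ (applied to $\#:=\#\{i:\lambda_i>t\}$) combined with $K_{N,V}(x,y)^2\geq 0$ yields
\begin{gather*}
\left|\mathbb{P}_{N,V}(\lambda_{\max}>t)-\int_t^\infty K_{N,V}(x,x)\,dx\right|\leq \frac{1}{2}\left(\int_t^\infty K_{N,V}(x,x)\,dx\right)^2.
\end{gather*}
Since the expected number of particles above $b_V+1$ will turn out to be exponentially small in $N$, the squared correction is absorbed into the desired $\mathcal{O}(1/N)$ error, and it suffices to prove that $\int_t^\infty K_{N,V}(x,x)\,dx = \mathcal{F}_{N,V}(t)(1+\mathcal{O}(1/N))$ uniformly in $t\in[b_V+1,\infty)$.

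For this the next step is to obtain uniform pointwise asymptotics for $K_{N,V}(x,x)$ on $[b_V+1,\infty)$. Via the confluent Christoffel--Darboux formula, $K_{N,V}(x,x)$ reduces to a bilinear combination of $p_N^{N,V}$, $p_{N-1}^{N,V}$ and their derivatives weighted by $e^{-NV(x)}$, whose leading order is extracted from the Deift--Zhou nonlinear steepest descent applied to the matrix-valued RH problem for the orthogonal polynomials. To the right of the support the output takes the form $K_{N,V}(x,x)=-\mathcal{F}_{N,V}'(x)(1+\mathcal{O}(1/N))$; that the RH leading term integrates back precisely to $\mathcal{F}_{N,V}$ is a direct computation using \eqref{def_G}--\eqref{def_eta} and the identity $\eta_V'(x)=\sqrt{(x-b_V)(x-a_V)}\,G_V(x)$. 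For $x$ in any bounded subinterval $[b_V+1,M]$ the required kernel asymptotics are already available from the RH analysis carried out in \cite{Diss}, so the remaining task is to push them uniformly to $x\to\infty$.

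This extension, and with it the content of $(\textbf{GA})_\infty$ together with $V''/V'^2=\mathcal{O}(1)$, is the main obstacle. The plan is to deform the jump contour of the normalized RH problem for $p_N^{N,V}$ into the shrinking tube $\mathcal{U}(n,x_0)$ along a curve that leaves the real axis by at most a constant multiple of $(\operatorname{Re} z)^{-n}$. On this contour the growth assumption $\operatorname{Re} V(z)\geq d_V\operatorname{Re} z$ forces the jump factors $e^{\pm NV(z)}$ to be exponentially close to the identity, of order $e^{-cN\operatorname{Re} z}$, even as $\operatorname{Re} z\to\infty$, while the condition $V''(x)/V'(x)^2=\mathcal{O}(1)$ provides the derivative control needed both when implementing the lens transformation inside the narrow tube and when matching to the local parametrix near $b_V$, so that the global error $R=I+\mathcal{O}(1/N)$ in $L^\infty$ survives along the unbounded portion of the contour. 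Combining this with the sandwich reduction of the first paragraph, and noting that $\mathcal{F}_{N,V}(x)\to 0$ as $x\to\infty$ (a consequence of $\eta_V'(x)\sim V'(x)\to\infty$, guaranteed by $(\textbf{GA})_\infty$), yields Theorem \ref{theorem_sl}.
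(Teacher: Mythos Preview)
Your overall strategy matches the paper's: reduce via the determinantal structure to the single integral $\int_t^\infty K_{N,V}(x,x)\,dx$, obtain uniform kernel asymptotics on $[b_V+1,\infty)$ by extending the Riemann--Hilbert analysis of \cite{KSSV} using the analyticity domain $\mathcal{U}(n,x_0)$ and the growth bound $\operatorname{Re} V\geq d_V\operatorname{Re} z$, and then evaluate the resulting Laplace-type integral. Your Bonferroni sandwich is essentially the paper's Hadamard bound \eqref{aux4}--\eqref{aux5} specialized to $k=2$, which is all that is needed once the integral is $\mathcal{O}(1/N)$.

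There is, however, a real confusion about where the hypothesis $V''(x)/V'(x)^2=\mathcal{O}(1)$ enters. In the paper it is used \emph{only} for the integration step (Lemma~\ref{lemma_integral_super}), not for the Riemann--Hilbert analysis. Concretely, the kernel asymptotics \eqref{K_superlarge} (Theorem~\ref{theorem_Ksl}) are proved under $(\textbf{GA})_\infty$ alone; what requires $V''/V'{}^2=\mathcal{O}(1)$ is showing that $\int_t^\infty \frac{e^{-N\eta_V(x)}}{(x-b_V)(x-a_V)}\,dx$ equals $\mathcal{F}_{N,V}(t)(1+\mathcal{O}(1/N))$ \emph{uniformly as $t\to\infty$}. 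In your write-up this is the step you call a ``direct computation'', but it is not: after the substitution $u=\eta_V(x)-\eta_V(t)$ one must bound $f'/f$ where $f(u)=[(x-b_V)(x-a_V)\eta_V'(x)]^{-1}$, and for large $x$ this reduces precisely to controlling $\eta_V''/\eta_V'{}^2$, which via \eqref{aux7} comes down to $V''/V'{}^2=\mathcal{O}(1)$. Equivalently, in your formulation, the uniform identification $K_{N,V}(x,x)=-\mathcal{F}_{N,V}'(x)(1+\mathcal{O}(1/N))$ already needs this hypothesis, since differentiating $\mathcal{F}_{N,V}$ produces a correction proportional to $\frac{1}{N}\bigl[\frac{1}{x-b_V}+\frac{1}{x-a_V}+\frac{\eta_V''(x)}{\eta_V'(x)}\bigr]\frac{1}{\eta_V'(x)}$.

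Your Riemann--Hilbert sketch also has some inaccuracies. There is no lens opening inside the tube $\mathcal{U}(n,x_0)$: the lens lives around the support $[a_V,b_V]$, and for $x\geq b_V+1$ one is entirely in the outer region. Likewise there is no matching to a local parametrix near $b_V$ in this regime. The actual mechanism (see the Appendix, Lemma~\ref{lemma_R}) is that for each large $x$ one deforms a small piece of $\Sigma_R$ off the real axis by $\kappa_x\sim x^{-n}$ into $\mathcal{U}(n,x_0)$, uses \eqref{KSSV8} to get $\Delta_R(z)=\mathcal{O}(e^{-N\tilde d\operatorname{Re} z})$ on the deformed arc, and concludes $\hat R_+(x)-\mathrm{Id}=\mathcal{O}(1/(Nx))$ and $\hat R_+'(x)=\mathcal{O}(1/(Nx^2))$ from the Cauchy representation \eqref{KSSV7}--\eqref{KSSV9}. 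None of this uses $V''/V'{}^2$.
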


The assumptions that are imposed on $V$ in addition to (\textbf{GA}) are in no way optimal. They are chosen such that at least convex polynomials and in particular the Gaussian unitary ensemble are included. For $V$ that do not satisfy these extra conditions one may try to modify the arguments in the proofs of Lemmas~\ref{K_superlarge} and~\ref{lemma_R}. One sees, e.g., from the arguments around~\eqref{KSSV11} that the faster~$V(x)$ grows for $x \to \infty$ the smaller the domain of analyticity of $V$ needs to be.

\subsection{Overview of the remaining parts of the paper}

The key in proving both of our theorems is that the upper tail probabilities $\mathbb{P}_{N,V} ( \lambda_{\max}>t )=$ $1 - \mathbb{P}_{N,V} ( \lambda_{\max} \leq t )$ are complementary to the gap probabilities that no component of $\lambda$ is contained in the interval $(t, \infty)$. For determinantal ensembles \eqref{defensemble} gap probabilities can be expressed in terms of the kernel~\eqref{defCDK} \cite[Section~4.6]{handbookRM} and one obtains
\begin{gather}\label{darst_O_K}
	\mathbb{P}_{N,V}(\lambda_{\max}>t)=\sum_{k=1}^N \frac{(-1)^{k+1}}{k!} \int_t^{\infty} \cdots \int_t^{\infty} \det \left( K_{N,V}(x_i,x_j)\right) _{1\leq i,j\leq k} \,{\mathrm d}x_1 \cdots{\mathrm d}x_k .
\end{gather}
As it turns out, for all of our analysis just the f\/irst term $\int_t^{\infty} K_{N,V}(x,x)\,{\mathrm d}x$ in the sum of~\eqref{darst_O_K} already
determines the leading order behavior of the tail probabilities. In the situation of moderate and large deviations we show in Section~\ref{sec_proofs} that the asymptotic results on the Christof\/fel--Darboux kernel $K_{N,V}(x,x)$ provided in~\cite{KSSV} together with a~well-known estimate on~$K_{N,V}(x,x)$ for large values of~$x$ suf\/f\/ice to prove Theorem~\ref{thm_O}. The reason why the latter estimate for large $x$ is used in the proof is that the leading order description of the Christof\/fel--Darboux kernel in~\cite{KSSV} is uniform only for $x$ in bounded subsets of $(b_V, \infty)$. In order to treat superlarge deviations uniformity is also required in unbounded subsets of~$(b_V, \infty)$. This is achieved in Section~\ref{sec_proof2} under additional assumptions on~$V$ that are formulated in Theorem~\ref{theorem_sl}. In this situation some of the arguments of~\cite{KSSV} need to be improved which is the content of Appendix~\ref{appendix}.

The results in~\cite{KSSV} have been obtained using the Deift--Zhou \cite{DeiftZhou} nonlinear steepest descent method for Riemann--Hilbert problems, following and improving on previous applications \cite{DKMVZ2, DKMVZ1, KuijlaarsVanlessen, Vanlessen} to orthogonal polynomials and to random matrices.

Finally, we like to mention that we are also able to treat the case that the domain of def\/inition of~$V$ is bounded, but still contains the support of the equilibrium measure $\mu_V$ in its interior. We refer the reader to Remark~\ref{remark_bounded_domain}.

\section{Proof of Theorem \ref{thm_O}}\label{sec_proofs}

As advertised at the end of the Introduction we begin by analyzing the f\/irst summand of \eqref{darst_O_K}.

\begin{Lemma}\label{lemma_M}
Assume that $V$ satisfies $({\bf GA})$ and let $\eta_V$ and $\mathcal{F}_{N,V}$ be given as in \eqref{def1_eta} $($see also~\eqref{def_eta}$)$ and~\eqref{def_FNV}. There exists a number $C > 0$ such that for all
$t \in \big(b_V + CN^{-2/3},\infty\big)$ we have
\begin{gather}\label{relM}
\int_t^{\infty} K_{N,V}(x,x) \,{\mathrm d}x = \mathcal{F}_{N,V}(t) \left( 1+\mathcal{O} \left( \frac{1}{N(t-b_V)^{3/2}}\right)\right).
\end{gather}
The error bound is uniform for $t$ in bounded subsets of $\big(b_V + CN^{-2/3},\infty\big)$.
\end{Lemma}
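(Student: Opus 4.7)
The strategy is a Laplace-type analysis: substitute the off-support asymptotics of the Christof\/fel--Darboux kernel from~\cite{KSSV}, then perform a single integration by parts that produces $\mathcal{F}_{N,V}(t)$ as the boundary term at $x=t$. Concretely, for $x$ in a bounded subset of $(b_V + CN^{-2/3},\infty)$ the analysis of~\cite{KSSV} provides a pointwise expansion of the form
\[ K_{N,V}(x,x) = g(x)\, e^{-N\eta_V(x)} \bigl(1 + \rho_N(x)\bigr), \qquad g(x) := \frac{b_V-a_V}{8\pi(x-b_V)(x-a_V)}, \]
with an error $\rho_N(x) = \mathcal{O}(1/[N(x-b_V)^{3/2}])$ uniform in~$x$.

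Fix $M$ larger than the supremum of the bounded set on which $t$ ranges. I split the integral as $\int_t^\infty = \int_t^M + \int_M^\infty$. The tail $\int_M^\infty K_{N,V}(x,x)\,{\mathrm d}x$ is controlled by a crude exponential bound on the kernel coming from the weight $e^{-NV(x)}$ together with the growth of~$V$ required by~(\textbf{GA}); since $\eta_V(M) > \eta_V(t)$ by an amount bounded away from zero uniformly in the bounded $t$-range, this tail is exponentially smaller than the main term $\mathcal{F}_{N,V}(t) \asymp e^{-N\eta_V(t)}/[N(t-b_V)^{3/2}]$ and is absorbed in the error.

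For the bounded integral $\int_t^M g(x) e^{-N\eta_V(x)}\,{\mathrm d}x$ the plan is to use
\[ e^{-N\eta_V(x)}\,{\mathrm d}x = -\frac{1}{N\eta_V'(x)}\,{\mathrm d}\bigl(e^{-N\eta_V(x)}\bigr) \]
and integrate by parts once. The boundary term at $x = t$ is precisely $g(t)\,e^{-N\eta_V(t)}/[N\eta_V'(t)] = \mathcal{F}_{N,V}(t)$, the claimed leading term; the one at $x = M$ is exponentially small. The remaining integral, $N^{-1}\!\int_t^M (g/\eta_V')'(x)\,e^{-N\eta_V(x)}\,{\mathrm d}x$, is where the size of the error originates: from~\eqref{def_G}--\eqref{def_gamma} one reads off $\eta_V'(x) \asymp (x-b_V)^{1/2}$ and $g(x) \asymp (x-b_V)^{-1}$ near $b_V$, so $(g/\eta_V')'(x) = \mathcal{O}((x-b_V)^{-5/2})$. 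Using the inequality $\eta_V(x) - \eta_V(t) \geq \eta_V'(t)(x-t)$ for $x \geq t$ (which holds because $\eta_V'$ is increasing on $(b_V,\infty)$ by~\eqref{def_eta} and condition~(2) of~(\textbf{GA})), a direct one-dimensional estimate bounds this remainder by $\mathcal{O}(e^{-N\eta_V(t)}/[N^2(t-b_V)^3])$; relative to $\mathcal{F}_{N,V}(t)$ this is exactly $\mathcal{O}(1/[N(t-b_V)^{3/2}])$. The $\rho_N$-contribution $\int_t^M g(x)\rho_N(x) e^{-N\eta_V(x)}\,{\mathrm d}x$ is treated identically, using the pointwise majorant $|\rho_N(x)| \leq C/[N(t-b_V)^{3/2}]$ valid for $x \in [t,M]$.

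The principal obstacle is the first input: extracting from the Riemann--Hilbert analysis in~\cite{KSSV} a pointwise kernel expansion of the precise form above, with a remainder of the asserted shape uniform down to the Airy transition scale $x - b_V \asymp N^{-2/3}$. Once this is in hand, the integration-by-parts evaluation and the tail bound for $x \geq M$ are routine one-dimensional asymptotic estimates.
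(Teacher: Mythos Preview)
Your proposal is correct and follows essentially the same route as the paper: split off the tail $\int_M^\infty$ via a crude exponential bound on the one-point density, insert the kernel asymptotics from~\cite{KSSV} on $[t,M]$, and extract the Laplace leading term. The only cosmetic difference is that the paper substitutes $u=\eta_V(x)-\eta_V(t)$ and then expands the resulting amplitude $f(u)$ by the mean-value theorem rather than integrating by parts; both devices produce the boundary term $\mathcal{F}_{N,V}(t)$ and the same relative error $\mathcal{O}\bigl(1/[N(t-b_V)^{3/2}]\bigr)$.
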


\begin{proof} Let $S > b_V$ be arbitrary but f\/ixed. We derive \eqref{relM} uniformly for $t \in \big(b_V + CN^{-2/3},S\big]$. We f\/irst show that we only need to consider the integral on a bounded domain. To this end, observe that it follows from~\eqref{def_FNV},~\eqref{def_eta}, and~\eqref{def_G} that there exist positive numbers~$d$, $D$ such that $\mathcal{F}_{N,V}(t) \geq d e^{-ND}$ for all $N$ and all $t \in (b_V ,S]$ (choose, e.g., $D = \eta_V(S) +1$). Next, we use that $K_{N,V}(x,x) = N \rho_N (x)$ where $\rho_N$ denotes the marginal density $\rho_N (x) = \int_{\mathbb{R}^{N-1}} P_{N,V}(x, y) \,{\mathrm d}y$. Well-known estimates from the theory of log-gases (see, e.g., \cite[Theorem~11.1.2]{bookPasturS}, see also \cite[Lemma~5.2]{GoVe}, \cite[Lemma~4.4]{Johansson98}) together with the fact that any $V$ satisfying (\textbf{GA}) grows at least linearly for $x \to \infty$ yield the existence of positive constants~$L$, $\hat{D}$, $\tau$ such that $\rho_N(x) \leq \hat{D} e^{- N \tau x}$ for all $N$ and $x \geq L$. Choosing $M \geq L$ with $M \tau > D$ we see that
\begin{gather}\label{defM}
	\int_M^{\infty} K_{N,V}(x,x)\,{\mathrm d}x =\mathcal{F}_{N,V}(t) \, \mathcal{O}\left( \frac{1}{N}\right),
\end{gather}
uniformly for $t \in (b_V ,S]$. We turn to the remaining part of the integral over the domain $(t, M)$ where we may assume $M > S +1$ without loss of generality. In this domain we now use the information on the integrand provided by \cite[Theorem~1.5(ii)]{KSSV}. Observe that the result in~\cite{KSSV} is stated using a linear rescaling $\lambda_V$ that maps $[-1,1]$ onto~$[a_V,b_V]$: $\lambda_V(t)=\frac{b_V-a_V}{2}t+\frac{b_V+a_V}{2}$. Note in addition that the function $\eta_V$ of the present paper equals $\eta_V \circ \lambda_V^{-1}$ in~\cite{KSSV}. It follows that there exists a positive number $C$ (corresponding to $c^{-1}$ in~\cite{KSSV}) such that
\begin{gather}\label{darst_K}
K_{N,V}(x,x)=\frac{b_V-a_V}{8\pi} \frac{e^{-N\eta_V(x)}}{(x-b_V)(x-a_V)} \left( 1+ \mathcal{O}\left( \frac{1}{N(x-b_V)^{3/2}}\right) \right)
\end{gather}
uniformly for $x \in \big(b_V+CN^{-2/3}, M\big)$, where we have dropped the last $\mathcal{O}$-term in \cite[Theo\-rem~1.5(ii)]{KSSV} since we only consider~$x$ in a bounded set. By~\eqref{defM} and~\eqref{darst_K} it remains to prove
\begin{gather}\label{aux1}
\int_t^M \frac{e^{-N\eta_V(x)}}{(x-b_V)(x-a_V)}\,{\mathrm d}x =\frac{e^{-N\eta_V(t)}}{N(t-b_V)(t-a_V)\eta_V'(t)} \left( 1+ \mathcal{O}\left( \frac{1}{N(t-b_V)^{3/2}}\right)\right).
\end{gather}
We proceed as in \cite[Lemma 4.8]{Diss}. Substituting $u:=\eta_V(x)-\eta_V(t)$ leads to
\begin{gather}\label{aux2}
\int_t^M \frac{e^{-N\eta_V(x)}}{(x-b_V)(x-a_V)}\,{\mathrm d}x =e^{-N\eta_V(t)} \int_0^{\eta_V(M)-\eta_V(t)} f(u) e^{-Nu}\,{\mathrm d}u
\end{gather}
with (observe that $\eta_V$ is strictly monotone, thus invertible)
\begin{gather}\label{aux_f}
	f(u):=\frac{1}{(x(u)-b_V)(x(u)-a_V)\eta_V'(x(u))}, \qquad x(u):=\eta_V^{-1} (u+\eta_V(t)).
\end{gather}
By the mean value theorem there exists a $0 < \xi_u <u$ for every $u \in (0,\eta_V(M)-\eta_V(t)]$ such that $f(u)=f(0)+f'(\xi_u)u$. To estimate $f'$ we use \eqref{def_eta} and obtain
\begin{gather}\label{f'}
\frac{f'(\xi)}{f(\xi)}=- \left[ \frac{3}{2} \left( \frac{1}{x(\xi)-b_V}+\frac{1}{x(\xi)-a_V}\right) +\frac{G_V'(x(\xi))}{G_V(x(\xi))}\right]\frac{1}{\eta_V'(x(\xi))}.
\end{gather}
Since $G_V$ is smooth and strictly positive and $x(\xi) \in (t, M) \subset [b_V, M]$ is contained in a~f\/ixed compact set for all relevant values of $\xi = \xi_u$, we have $G_V'(x(\xi))/G_V(x(\xi))=\mathcal{O}(1)$ and $\eta_V'(x(\xi))^{-1}=\mathcal{O}((x(\xi)-b_V)^{-1/2})
=\mathcal{O}((t-b_V)^{-1/2})$. Moreover, $f(\xi) < f(0)$ and the above mean value representation for $f(u)$ yields
\begin{gather}
	f(u)= f(0) \left( 1+ \mathcal{O} \left( \frac{1}{(t-b_V)^{3/2}}\right) u\right) \nonumber\\
\hphantom{f(u)}{} =
	 \frac{1}{(t-b_V)(t-a_V)\eta_V'(t)}\left( 1+ \mathcal{O} \left( \frac{1}{(t-b_V)^{3/2}}\right) u\right).\label{aux2.5}
\end{gather}
With this representation the integral on the right of~\eqref{aux2} becomes trivial. Recall that we have chosen $M > S+1$. Thus for all $t \in (b_V, S]$ we have
\begin{gather*}
\int_0^{\eta_V(M)-\eta_V(t)} e^{-Nu}\,{\mathrm d}u=\frac{1}{N}\big( 1+\mathcal{O}\big(e^{-cN}\big) \big) \qquad \text{with} \quad c:= \eta_V(M)-\eta_V(S) > 0.
\end{gather*}
As $\int_0^{\infty} u e^{-Nu} \,{\mathrm d}u= N^{-2}$ we have derived \eqref{aux1} and the proof is complete.
\end{proof}

\begin{proof}[Proof of Theorem \ref{thm_O}]
Let $C$ denote the constant introduced in Lemma \ref{lemma_M}. Bounding the probability measure $\mathbb{P}_{N,V}$ by $1$ it follows from \eqref{def_eta} and from the boundedness of $G_V$ that the quotient $\mathbb{P}_{N,V}( \lambda_{\max}>t) / \mathcal{F}_{N,V}(t)$ is a bounded function of $t$ on the interval $\big(b_V, b_V + C N^{-2/3}\big]$. Therefore formula~\eqref{formel_O} holds in that region simply by an appropriate choice for the constant in the $\mathcal{O}$-term.

From now on we may restrict our attention to values $t \in (b_V + C N^{-2/3}, S]$ for some arbitrary but f\/ixed number $S > b_V$. For such values of $t$ we f\/irst record the rough bounds
\begin{gather}\label{aux3}
\int_t^{\infty} K_{N,V}(x,x) \,{\mathrm d}x = \mathcal{O} \left( \frac{1}{N(t-b_V)^{3/2}}\right) = \mathcal{O}(1)
\end{gather}
that follow from Lemma \ref{lemma_M}, \eqref{def_FNV}, \eqref{def_eta} and from the strict positivity and continuity of $G_V$ on $[b_V, S]$.

In order to use \eqref{aux3} for the estimates of the summands in \eqref{darst_O_K} with $k \geq 2$ we recall a basic fact from linear algebra. Suppose that $A = (A_{ij})_{ij}$ is a real, positive def\/inite $k \times k$ matrix. Then the determinant of~$A$ can be estimated by the product of the diagonal entries of~$A$,
\begin{gather}\label{aux4}
 \vert \det A \vert = \det A \leq \prod_{i=1} A_{ii}.
\end{gather}
To see this denote by $B$ a positive def\/inite root of $A = B^2$ and estimate $\det B$ by Hadamard's inequality, i.e., by the product of the (Euclidean) length of the row vectors of~$B$. Then use $\sum\limits_{j=1}^k B_{ij}^2 = \sum\limits_{j=1}^k B_{ij} B_{ji} = A_{ii}$.

It is not dif\/f\/icult to see from \eqref{defCDK} that $\left( K_{N,V}(x_i,x_j)\right) _{1\leq i,j\leq k}$ is a positive def\/inite matrix and we can apply~\eqref{aux4}. Together with Fubini's theorem we arrive at
\begin{gather}\label{aux5}
 \int_t^{\infty} \cdots \int_t^{\infty} \det \left( K_{N,V}(x_i,x_j)\right) _{1\leq i,j\leq k} \,{\mathrm d}x_1 \cdots{\mathrm d}x_k \leq
\left( \int_t^{\infty} K_{N,V}(x,x)\,{\mathrm d}x \right) ^k.
 \end{gather}
Combining \eqref{darst_O_K}, \eqref{aux5}, and \eqref{aux3} gives
\begin{gather*}
		\bigg | \mathbb{P}_{N,V}(\lambda_{\max}>t)-\int_t^{\infty} K_{N,V}(x,x)\,{\mathrm d}x\bigg |
		\leq \bigg ( \int_t^{\infty} K_{N,V}(x,x)\,{\mathrm d}x \bigg ) \mathcal{O} \left( \frac{1}{N(t-b_V)^{3/2}}\right)
	\end{gather*}
and Lemma \ref{lemma_M} completes the proof.
\end{proof}

\begin{Remark}\label{remark_bounded_domain}
Formula \eqref{darst_K}, which is central in the proof of Theorem \ref{thm_O}, is derived in \cite[Theorem 1.5(ii)]{KSSV} in a slightly more general setting. There real analytic functions $V\colon J \to \mathbb{R}$ are considered with $J =[L_-, L_+] \cap \mathbb{R}$ and $-\infty \leq L_- < L_+ \leq \infty$. In addition to conditions (\textbf{GA})(2) and (\textbf{GA})(3) (in the case of inf\/inite $L_-$ resp.~$L_+$) it is assumed that there exist $L_- < a_V < b_V < L_+$ solving equations \eqref{mrs_eq}, which implies that the support of the equilibrium measure is contained in the interior of $J$. In random matrix theory this corresponds to the case of ``soft edges'' (see assumption~\cite[(\textbf{GA})$_1$]{KSSV} and the discussion preceding it).

In the case of $L_+ < \infty$ the tail probabilities $\mathbb{P}_{N,V}(\lambda_{\max}>t)$ are obviously equal to $0$ for $t \geq L_+$.
For $b_V < t < L_+$ the leading order of $\mathbb{P}_{N,V}(\lambda_{\max}>t)$ is provided by the leading order of the integral
\begin{gather*}
\frac{b_V-a_V}{8\pi} \int_t^{L_+} \frac{e^{-N\eta_V(x)}}{(x-b_V)(x-a_V)}\,{\mathrm d}x.
\end{gather*}
A computation shows that $\mathcal{F}_{N,V}(t)$ describes the leading order of $\mathbb{P}_{N,V}(\lambda_{\max}>t)$ if $N(L_+ - t) \to \infty$ for $N \to \infty$. For all this as well as for the results of Theorems~\ref{thm_O} and~\ref{theorem_sl} it is irrelevant whether $L_-$ is f\/inite or inf\/inite (see \cite{Diss} for more details).
\end{Remark}

\section{Proof of Theorem \ref{theorem_sl}}\label{sec_proof2}

In the same way as Theorem \ref{thm_O} followed from Lemma~\ref{lemma_M} the result on superlarge deviations, Theorem~\ref{theorem_sl}, is a consequence of

\begin{Lemma}\label{lemma_sl}
Assume that $V$ satisfies $({\bf GA})_\infty$ together with $\frac{V''(x)}{V'(x)^2}=\mathcal{O}(1)$ for $x\to\infty$. Let $\eta_V$ and $\mathcal{F}_{N,V}$ be given as in~\eqref{def1_eta} $($see also~\eqref{def_eta}$)$ and~\eqref{def_FNV}. Then, for sufficiently large values of~$N$,
\begin{gather*}
\int_t^{\infty} K_{N,V}(x,x) \,{\mathrm d}x = \mathcal{F}_{N,V}(t) \left( 1+\mathcal{O} \left( \frac{1}{N}\right)\right)
\end{gather*}
uniformly for $t \in [b_V + 1,\infty)$.
\end{Lemma}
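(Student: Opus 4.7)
The proof follows the architecture of Lemma~\ref{lemma_M} but with two key upgrades dictated by the stronger hypotheses. The first and decisive upgrade is an enhanced version of~\eqref{darst_K} in which the asymptotic
\begin{gather*}
K_{N,V}(x,x)=\frac{b_V-a_V}{8\pi}\frac{e^{-N\eta_V(x)}}{(x-b_V)(x-a_V)}\left(1+\mathcal{O}\left(\frac{1}{N}\right)\right)
\end{gather*}
holds uniformly on the unbounded half-line $x\in[b_V+1,\infty)$ rather than only on bounded subsets of $(b_V,\infty)$. This is precisely the improvement of~\cite{KSSV} to be carried out in the appendix: the conditions in $({\bf GA})_\infty$ supply an analytic continuation of $V$ into a strip $\mathcal{U}(n,x_0)$ in which the lens contours of the Deift--Zhou steepest-descent analysis can be deformed for arbitrarily large $x$, together with the lower bound $\operatorname{Re}(V(z))\geq d_V\operatorname{Re}(z)$ that is needed to make the jumps off the real line exponentially small, while the ratio assumption $V''/V'^2=\mathcal{O}(1)$ is what keeps the global and local parametrices under uniform control as $x\to\infty$. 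Since $t\geq b_V+1$ keeps us bounded away from $b_V$, the $(x-b_V)^{-3/2}$ factor appearing in~\eqref{darst_K} is absorbed into a constant and the uniform error really is $\mathcal{O}(1/N)$.

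Once this uniform kernel asymptotic is available, the remaining task is to evaluate $\int_t^{\infty}\frac{e^{-N\eta_V(x)}}{(x-b_V)(x-a_V)}\,{\mathrm d}x$ up to an $\mathcal{O}(1/N)$ relative error uniformly in $t\in[b_V+1,\infty)$. Substituting $u:=\eta_V(x)-\eta_V(t)$ as in Lemma~\ref{lemma_M} transforms the integral into $e^{-N\eta_V(t)}\int_0^{\infty}f(u)e^{-Nu}\,{\mathrm d}u$ with $f$ defined by~\eqref{aux_f}. Since each of $(x-b_V)$, $(x-a_V)$ and $\eta_V'(x)$ is increasing on $[b_V+1,\infty)$, $f$ is monotonically decreasing and $f(\xi)\leq f(0)$; the mean-value expansion $f(u)=f(0)+f'(\xi_u)u$ therefore yields a main term $f(0)/N$ and an error bounded by $\sup_{\xi\geq 0}|f'(\xi)/f(\xi)|\cdot f(0)\int_0^{\infty}u e^{-Nu}\,{\mathrm d}u=\sup|f'/f|\cdot f(0)/N^2$. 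Everything reduces to proving $|f'(\xi)/f(\xi)|=\mathcal{O}(1)$ uniformly on $[b_V+1,\infty)$, which in view of $f(0)e^{-N\eta_V(t)}=\mathcal{F}_{N,V}(t)\cdot 8\pi/(b_V-a_V)\cdot N$ gives precisely the claimed $\mathcal{O}(1/N)$ relative error.

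To establish this uniform bound I would exploit~\eqref{f'} by treating bounded and unbounded portions of $[b_V+1,\infty)$ separately. On any bounded subinterval the quantities $(x-b_V)^{-1}$, $(x-a_V)^{-1}$, $G_V'(x)/G_V(x)$ are bounded by smoothness and strict positivity of $G_V$, and $\eta_V'(x)$ is bounded away from zero, so nothing needs to be done. For $x\to\infty$ a direct expansion of~\eqref{def_G} yields $G_V(x)=V'(x)/x\cdot(1+o(1))$ and $G_V'(x)/G_V(x)=V''(x)/V'(x)+\mathcal{O}(1/x)$, while $\eta_V'(x)=\sqrt{(x-b_V)(x-a_V)}\,G_V(x)\sim V'(x)$; the hypothesis $V''(x)/V'(x)^2=\mathcal{O}(1)$ then forces $G_V'(x)/(G_V(x)\eta_V'(x))=\mathcal{O}(1)$, while the remaining terms in~\eqref{f'} are each $\mathcal{O}(1/V'(x))=\mathcal{O}(1)$. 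Combining these estimates with the uniform kernel asymptotic produces the claimed identity. The genuine obstacle is therefore not this Laplace-type bookkeeping but the first step, namely pushing the Riemann--Hilbert steepest-descent analysis of $K_{N,V}(x,x)$ into the unbounded regime $x\to\infty$ without loss of uniformity in the $\mathcal{O}$-term; this is exactly the refinement relegated to the appendix and the place where the additional assumptions of Theorem~\ref{theorem_sl} are used in an essential way.
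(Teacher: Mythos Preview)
Your proposal is correct and follows essentially the same two-step architecture as the paper: first invoke the uniform kernel asymptotic on $[b_V+1,\infty)$ established in the appendix, then run the Laplace-type argument of Lemma~\ref{lemma_M} with the substitution $u=\eta_V(x)-\eta_V(t)$ and the mean-value expansion of $f$, reducing everything to the uniform bound $f'/f=\mathcal{O}(1)$.

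Two small points of divergence are worth noting. First, you misattribute the hypothesis $V''/V'^2=\mathcal{O}(1)$: in the paper this condition plays \emph{no} role in the Riemann--Hilbert analysis of the appendix (Theorem~\ref{theorem_Ksl} requires only $({\bf GA})_\infty$), and is used solely in the Laplace step (Lemma~\ref{lemma_integral_super}) to control $f'/f$ at infinity --- exactly where you also use it. Second, to bound $f'/f$ for large $x$ the paper does not expand $G_V$ from its integral definition~\eqref{def_G} as you suggest, but instead rewrites $f'/f$ in the equivalent form involving $\eta_V''/\eta_V'$ and then compares $\eta_V$ directly with $V$ via the elementary estimates $|\eta_V'(x)-V'(x)|\le 2/(x-b_V)$ and $|\eta_V''(x)-V''(x)|\le 2/(x-b_V)^2$ obtained from~\eqref{aux_eta}. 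This route is a little cleaner than extracting the large-$x$ behaviour of $G_V'/G_V$ from~\eqref{def_G}, though both lead to the same conclusion.
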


\begin{proof}
Again we follow the arguments of Section \ref{sec_proofs} but we omit the splitting of the domain of integration in the proof of Lemma~\ref{lemma_M}. This can be done in the following way. We replace~\eqref{darst_K} by
\begin{gather}\label{K_superlarge}
	K_{N,V}(x,x)=\frac{b_V-a_V}{8\pi} \frac{e^{-N\eta_V(x)}}{(x-b_V)(x-a_V)} \left( 1+\mathcal{O} \left(\frac{1}{N}\right)\right)
\end{gather}
uniformly for all $x\geq b_V+1$ and we replace \eqref{aux1} by
\begin{gather}\label{aux1_sl}
\int_t^{\infty} \frac{e^{-N\eta_V(x)}}{(x-b_V)(x-a_V)}\,{\mathrm d}x = \mathcal{F}_{N,V}(t) \left( 1+ \mathcal{O} \left( \frac{1}{N}\right)\right).
\end{gather}
uniformly for all $t \geq b_V+1$. Relation \eqref{K_superlarge} will be proved in Appendix~\ref{appendix} (see Theorem~\ref{theorem_Ksl}, here we need that $N$ is suf\/f\/iciently large) for all~$V$ satisfying $({\bf GA})_\infty$ by adapting the arguments of~\cite{KSSV}. The second claim \eqref{aux1_sl} is the content of the subsequent lemma.
\end{proof}

\begin{Lemma}\label{lemma_integral_super}
Assume that $V$ satisfies $({\bf GA})$ and furthermore $\frac{V''(x)}{V'(x)^2}=\mathcal{O}(1)$ for $x\to\infty$. Then~\eqref{aux1_sl} holds uniformly for all $t \geq b_V+1$.
\end{Lemma}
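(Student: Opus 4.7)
The plan is to substitute $u:=\eta_V(x)-\eta_V(t)$ as in the proof of Lemma~\ref{lemma_M}, giving
\[
\int_t^{\infty} \frac{e^{-N\eta_V(x)}}{(x-b_V)(x-a_V)}\,{\rm d}x = e^{-N\eta_V(t)}\int_0^{\infty} f(u)\,e^{-Nu}\,{\rm d}u,
\]
with $f(u)$ as in~\eqref{aux_f}. Since $x(u)\to\infty$ as $u\to\infty$, the factor $f(u)e^{-Nu}$ vanishes at infinity, so integration by parts yields
\[
\int_0^{\infty} f(u) e^{-Nu}\,{\rm d}u = \frac{f(0)}{N} + \frac{1}{N}\int_0^{\infty} f'(u) e^{-Nu}\,{\rm d}u.
\]
Because $f(0)=[(t-b_V)(t-a_V)\eta_V'(t)]^{-1}$ already produces the desired main term, the lemma reduces to showing that the second integral is $\mathcal{O}(f(0)/N)$, uniformly in $t\geq b_V+1$.

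My strategy for this is to prove $|f'(u)/f(u)|\leq C$ uniformly in $u\geq 0$ and $t\geq b_V+1$; integrating the resulting differential inequality gives $f(u)\leq f(0)e^{Cu}$, so $|f'(u)|e^{-Nu}\leq Cf(0)e^{(C-N)u}$ and $\int_0^{\infty}|f'(u)|e^{-Nu}{\rm d}u\leq Cf(0)/(N-C)=\mathcal{O}(f(0)/N)$ once $N>C$. From~\eqref{f'},
\[
\frac{f'(u)}{f(u)} = -\left[\frac{3}{2}\!\left(\frac{1}{x(u)-b_V}+\frac{1}{x(u)-a_V}\right) + \frac{G_V'(x(u))}{G_V(x(u))}\right]\frac{1}{\eta_V'(x(u))}.
\]
The constraint $x(u)\geq b_V+1$ bounds the rational term uniformly, and since $\eta_V''(x)=V''(x)+2\int(x-y)^{-2}{\rm d}\mu_V(y)>0$ the function $\eta_V'$ is strictly increasing on $(b_V,\infty)$, so $\eta_V'(x(u))\geq\eta_V'(b_V+1)>0$. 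It remains to control $|G_V'/G_V|/\eta_V'$ uniformly for $x\geq b_V+1$.

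The main obstacle is the behavior of this last ratio as $x\to\infty$; on any compact subset $[b_V+1,M]$ it is bounded by smoothness and strict positivity of $G_V$. For large $x$ I would decompose $G_V(x)=V'(x)A(x)-B(x)$ with $A(x)=[(x-b_V)(x-a_V)]^{-1/2}$ and $B(x)=\pi^{-1}\int_{a_V}^{b_V}V'(t)[(b_V-t)(t-a_V)]^{-1/2}(x-t)^{-1}{\rm d}t$. Expanding $(x-t)^{-1}$ in powers of $1/x$ and using both Mhaskar--Rakhmanov--Saff equations~\eqref{mrs_eq} --- the first of which annihilates what would otherwise be the leading $1/x$-coefficient of $B$ --- produces $B(x)=2/x^2+\mathcal{O}(x^{-3})$. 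Hence $G_V(x)\sim V'(x)/x$ and $\eta_V'(x)\sim V'(x)$, while differentiating term by term gives $|G_V'(x)/G_V(x)|=\mathcal{O}(V''(x)/V'(x))+\mathcal{O}(1/x)$; dividing by $\eta_V'(x)$ yields
\[
\frac{|G_V'(x)/G_V(x)|}{\eta_V'(x)}=\mathcal{O}\!\left(\frac{V''(x)}{V'(x)^2}+\frac{1}{xV'(x)}\right)=\mathcal{O}(1),
\]
where the extra hypothesis $V''/V'^2=\mathcal{O}(1)$ enters essentially. Combining this with the estimate for the first bracketed term closes the uniform bound on $|f'/f|$ and completes the proof. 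The delicate step is really the asymptotic analysis of $G_V$: the cancellation built into the first Mhaskar--Rakhmanov--Saff equation saves one power of $x$ in $B$, and the hypothesis on $V''/V'^2$ then just barely tames the growth of $V''/V'$ relative to $\eta_V'\sim V'$.
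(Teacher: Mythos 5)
Your proof is correct and reaches the same conclusion, but the way you establish the uniform bound on $|f'/f|$ is genuinely different from the paper's. The paper rewrites \eqref{f'} in the equivalent form $f'/f = -\bigl[(x-b_V)^{-1} + (x-a_V)^{-1} + \eta_V''/\eta_V'\bigr]/\eta_V'$ (using $\eta_V' = \sqrt{(x-b_V)(x-a_V)}\,G_V$) and then invokes the simple pointwise comparison estimates \eqref{aux7}, $|\eta_V'(x)-V'(x)|\le 2/(x-b_V)$ and $|\eta_V''(x)-V''(x)|\le 2/(x-b_V)^2$ for $x\ge b_V+1$, which fall directly out of \eqref{def1_eta}, \eqref{aux_eta} and the fact that $\mu_V$ is a probability measure on $[a_V,b_V]$. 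Together with $V'(b_V)>0$ and the hypothesis $V''/V'^2=\mathcal{O}(1)$, these transfer the boundedness of $1/V'$ and $V''/V'^2$ to $1/\eta_V'$ and $\eta_V''/\eta_V'^2$ for large $x$, with no asymptotic expansion of $G_V$ required. Your route instead keeps the $G_V'/G_V$ form of \eqref{f'} and establishes $G_V(x)\sim V'(x)/x$ (hence $\eta_V'\sim V'$) by expanding the Cauchy-type integral $B(x)$ in powers of $1/x$ and using both Mhaskar--Rakhmanov--Saff identities \eqref{mrs_eq}; this is a correct but more computational detour, whose virtue is that it makes visible exactly where the MRS cancellation (the vanishing of the $1/x$ coefficient of $B$) is what keeps $G_V$ comparable to $V'/x$. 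A further small difference is cosmetic: you integrate by parts and then exponentiate the differential inequality to get $f(u)\le f(0)e^{Cu}$, which requires $N>C$; the paper applies the mean value theorem together with the monotonicity of $f$ (note $f$ is decreasing because $(x-b_V)(x-a_V)$ and $\eta_V'$ are both increasing on $(b_V,\infty)$, so $f(\xi_u)\le f(0)$) and the exact moments $\int_0^\infty e^{-Nu}\,{\rm d}u = N^{-1}$, $\int_0^\infty u e^{-Nu}\,{\rm d}u = N^{-2}$. In context both are adequate, since the surrounding results are asserted only for $N$ sufficiently large.
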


\begin{proof}
We begin by comparing $\eta_V$ with $V$. Using that the equilibrium measure $\mu_V$ is a probability measure supported on $[a_V, b_V]$ it follows from the def\/inition of $\eta_V$ via~\eqref{def1_eta}, \eqref{aux_eta} that for all $x \geq b_V+1$ we have
\begin{gather}\label{aux6}
\vert \eta_V(x) - V(x) \vert \leq \vert l_V \vert +2 \log(x -a_V), \\ \label{aux7}
\vert \eta_V'(x) - V'(x) \vert \leq \frac{2}{x-b_V}, \qquad \text{and} \qquad
\vert \eta_V''(x) - V''(x) \vert \leq \frac{2}{(x-b_V)^2} .
\end{gather}
For $V$ satisfying (\textbf{GA}) we know that $V(x)$ grows at least linearly as $x \to \infty$. From \eqref{aux6} we then learn that $\eta_V(x) \to \infty$ for $x \to \infty$. Thus the substitution $u=\eta_V(x)-\eta_V(t)$ performed in the proof of Lemma~\ref{lemma_M} gives
\begin{gather}\label{sub_super}
\int_t^{\infty} \frac{e^{-N\eta_V(x)}}{(x-b_V)(x-a_V)}\,{\mathrm d}x = e^{-N\eta_V(t)} \int_0^{\infty} f(u)e^{-Nu}\,{\mathrm d}u
\end{gather}
with $f$ as in~\eqref{aux_f}. In order to derive a bound on $f'$ we use in addition to \eqref{f'} also
\begin{gather*}
\frac{f'(\xi)}{f(\xi)} =- \left[ \left( \frac{1}{x(\xi)-b_V}+\frac{1}{x(\xi)-a_V}\right) +\frac{\eta_V''(x(\xi))}{\eta_V'(x(\xi))}\right]\frac{1}{\eta_V'(x(\xi))}.
\end{gather*}
We are now able to derive for all $u \geq 0$ the representation
\begin{gather}\label{aux8}
f(u)= f(0) ( 1+ \mathcal{O} (1) u)	 = \frac{1}{(t-b_V)(t-a_V)\eta_V'(t)}( 1+ \mathcal{O} (1) u)
\end{gather}
that replaces~\eqref{aux2.5}: Start again with the mean value formula $f(u) = f(0) + f'(\xi_u)u$. Note that for $u > 0$ we have $x(u) \geq x(\xi_u) \geq t \geq b_V+1$. Using the monotonicity of $f$ it suf\/f\/ices to show the boundedness of $f'(\xi)/f(\xi)$ for all $\xi \geq 0$, i.e., $x(\xi) \geq b_V+1$. For $x(\xi)$ in bounded subsets of $[b_V+1, \infty)$ this follows from~\eqref{f'} as in the proof of Lemma~\ref{lemma_M}. In order to treat the case of large values of $x(\xi)$ we f\/irst observe that $V'(b_V) > 0$ (see, e.g., \cite[Lemma~2.1]{Diss}) implying the boundedness of $1/V'(x)$ and of $V''(x)/V'(x)^2$ for $x \geq b_V$ by the assumptions on $V$ (recall in particular (\textbf{GA})(2)). The estimates of~\eqref{aux7} then allow to deduce the boundedness of $1/\eta_V'(x)$ and of $\eta_V''(x)/\eta_V'(x)^2$ for suf\/f\/iciently large values of~$x$.

The statement of Lemma \ref{lemma_integral_super} then follows from \eqref{sub_super}, \eqref{aux8}, and from the trivial identities $\int_0^{\infty} e^{-Nu}\,{\mathrm d}u = N^{-1}$ and $\int_0^{\infty} u e^{-Nu}\,{\mathrm d}u = N^{-2}$.
\end{proof}

\appendix
\section{Appendix}\label{appendix}

The purpose of this appendix is to establish asymptotic formula~\eqref{K_superlarge} that was used in the proof of Theorem~\ref{theorem_sl}. We formulate the corresponding result in Theorem~\ref{theorem_Ksl} which improves on the $\mathcal{O}$-term in \cite[Theorem~1.5(ii)]{KSSV} for unbounded domains under more restrictive assumptions on~$V$ than present in~\cite{KSSV}.

\begin{Theorem}\label{theorem_Ksl}
Assume that $V$ satisfies $({\bf GA})_{\infty}$. Then, for sufficiently large values of~$N$, \eqref{K_superlarge}~holds uniformly for all $x \geq b_V+1$.
\end{Theorem}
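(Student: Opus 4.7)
The plan is to follow the nonlinear steepest descent analysis of the Riemann--Hilbert problem (RHP) for orthogonal polynomials carried out in~\cite{KSSV}, and to refine the estimates so that the uniform control of the remainder matrix $R$ extends from bounded subsets of $(b_V,\infty)$ to the half-line $[b_V+1,\infty)$. Recall the RHP for the matrix $Y(z)$ whose $(1,1)$-entry is the monic orthogonal polynomial with respect to $e^{-NV}$, and the standard sequence of transformations $Y\to T\to S\to R$ (normalization via the $g$-function, opening of the lens, subtraction of the outer parametrix $P^{(\infty)}$ and of the edge parametrices near $a_V$, $b_V$). For $x>b_V$ outside the lens, unwinding these transformations yields an identity of the schematic form
\begin{gather*}
K_{N,V}(x,x) = e^{-N\eta_V(x)}\cdot\Psi\bigl(x;R(x),R'(x)\bigr),
\end{gather*}
where $\Psi$ is an explicit rational expression in the entries of $P^{(\infty)}(x)$ (built from $((z-b_V)/(z-a_V))^{1/4}$) and of $R(x)$. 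Setting $R=I$ one computes that $\Psi$ reduces to $\frac{b_V-a_V}{8\pi(x-b_V)(x-a_V)}$. Since $P^{(\infty)}(x)$ and $(P^{(\infty)})'(x)$ are bounded on $[b_V+1,\infty)$ (their entries tend to explicit constants as $x\to\infty$), the proof is reduced to establishing the uniform bounds
\begin{gather*}
R(x)-I=\mathcal{O}(1/N), \qquad R'(x)=\mathcal{O}(1/N), \qquad x\geq b_V+1.
\end{gather*}

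To prove these bounds I would revisit the small-norm argument for $R$ from~\cite{KSSV}, using the extra hypothesis $({\bf GA})_\infty$ to cure the non-uniformity of the jump estimates on the unbounded portion of the lens in the outside region. Since $V$ extends analytically to $\mathcal{U}(n,x_0)$ with $\operatorname{Re}V(z)\geq d_V\operatorname{Re}(z)$, the lens can be deformed to run inside $\mathcal{U}(n,x_0)$, respecting the cap $|\operatorname{Im}(z)|<(\operatorname{Re}(z))^{-n}$. On such a deformed contour the relevant phase $\varphi$, which is the analytic continuation of $\eta_V$ off the real axis, satisfies
\begin{gather*}
\operatorname{Re}\varphi(z) \geq c_1\operatorname{Re}V(z) - c_2\log\operatorname{Re}(z) \geq c_3\operatorname{Re}(z)
\end{gather*}
for $\operatorname{Re}(z)$ large, using the logarithmic-potential representation of $\eta_V$ exactly as in~\eqref{aux6}. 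Consequently $J_R-I$ is of size $\mathcal{O}(1/N)$ uniformly in $L^1\cap L^2\cap L^\infty$ on the full $R$-contour; near the endpoint $b_V$ the edge parametrix contributes an error of order $1/(N(x-b_V)^{3/2})$, which is $\mathcal{O}(1/N)$ for $x\geq b_V+1$. The standard singular-integral inversion then gives $R-I=\mathcal{O}(1/N)$ uniformly, and differentiating the Cauchy integral representation at an evaluation point $x\geq b_V+1$ kept at a positive distance from the contour yields the same bound for $R'$.

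Combining these uniform estimates with the bounded behaviour of $P^{(\infty)}$ in the identity for $\Psi$ produces the desired asymptotic~\eqref{K_superlarge} with error $\mathcal{O}(1/N)$ uniform in $x\in[b_V+1,\infty)$. The main obstacle is the contour deformation step: realising the lens inside the thin pinching tube $\mathcal{U}(n,x_0)$ and keeping the jump estimates uniform all the way to infinity. The geometric constraint $|\operatorname{Im}(z)|<(\operatorname{Re}(z))^{-n}$ forces the contour towards the real axis, so that the gain coming from the imaginary part of the phase shrinks as $\operatorname{Re}(z)$ grows; it is precisely the linear lower bound $\operatorname{Re}V(z)\geq d_V\operatorname{Re}(z)$ that compensates and allows the small-norm argument to close. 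This is the place where $({\bf GA})_\infty$ is used in an essential way, and it is where the analysis of~\cite{KSSV} must be strengthened.
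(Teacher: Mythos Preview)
Your overall route---unwind the Deift--Zhou transformations, express the kernel through $P^{(\infty)}$ and the remainder $R$, then upgrade the bounds on $R$ using $({\bf GA})_\infty$---is exactly the paper's. The substantive gap is in the bounds you declare as the reduction target. You aim for
\[
R(x)-I=\mathcal{O}(1/N),\qquad R'(x)=\mathcal{O}(1/N),
\]
and then argue that, since $P^{(\infty)}$ and $(P^{(\infty)})'$ are bounded, the expression $\Psi$ picks up an additive error $\mathcal{O}(1/N)$. But the leading part $\Psi(x;I,0)=\frac{b_V-a_V}{8\pi(x-b_V)(x-a_V)}$ is itself of size $x^{-2}$ for large $x$. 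Concretely, the correction to $K_{N,V}(x,x)$ coming from $R$ is, after taking the limit $y\to x$ in the Christof\/fel--Darboux formula,
\[
m(x)^T\begin{pmatrix}0&-1\\1&0\end{pmatrix}A^{-1}\hat R_+(x)^{-1}\hat R_+'(x)\,A\,m(x)
=e^{-N\eta_V(x)}\,\mathcal{O}\bigl(|\hat R_+'(x)|\bigr),
\]
since $m(x)=e^{-N\eta_V(x)/2}\cdot(\text{bounded vector})$. An $\mathcal{O}(1/N)$ bound on $\hat R_+'$ therefore yields only a \emph{relative} error $\mathcal{O}(x^2/N)$, which is not uniform on $[b_V+1,\infty)$. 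What is actually required---and what the paper establishes in Lemma~\ref{lemma_R}---is
\[
\hat R_+(x)-I=\mathcal{O}\!\left(\frac{1}{Nx}\right),\qquad \hat R_+'(x)=\mathcal{O}\!\left(\frac{1}{Nx^2}\right).
\]

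A second point concerns the geometry. The unbounded piece of $\Sigma_R$ to the right of the edge disk is the real half-line itself (there is no ``lens'' there; the lens lives between the two edge disks), and the evaluation point $x$ lies \emph{on} that half-line. So $x$ cannot be ``kept at a positive distance from the contour'' by a single global deformation: pushing the half-line into $\mathcal{U}(n,x_0)$ brings it only to distance $\sim x^{-n}$ from the real point $x$. The paper handles this by deforming \emph{locally} for each $x$, replacing $(x-\kappa_x,x+\kappa_x)$ by a half-circle of radius $\kappa_x\sim x^{-n}$, and then splitting the resulting Cauchy integral for $R_x(x)-I$ and $R_x'(x)$ into a far piece $|\zeta-x|\ge x/2$ (where $\|\Delta_R+\tilde\mu_x\Delta_R\|_{L^1}=\mathcal{O}(1/N)$ and the denominator contributes the factor $x^{-1}$ resp.\ $x^{-2}$) and a near piece (where the linear lower bound $\operatorname{Re}V\ge d_V\operatorname{Re}z$ from $({\bf GA})_\infty$(2) gives $\Delta_R=\mathcal{O}(e^{-N\tilde d x/2})$, overwhelming the loss $\kappa_x^{-1}$ resp.\ $\kappa_x^{-2}$). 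Your intuition that the linear growth of $\operatorname{Re}V$ is the decisive compensation is correct, but it must be combined with this far/near decomposition to produce the extra decay in $x$; with only the bounds you wrote, the proof does not close.
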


\begin{proof}
We begin by recalling the main steps in the proof of \cite[Theorem~1.5(ii)]{KSSV}. First one needs to relate the Christof\/fel--Darboux kernel to the solution of a Riemann--Hilbert problem. Keeping in mind the linear scaling $\lambda_V$ that is used in~\cite{KSSV} (recall our explanation before~\eqref{darst_K}) the f\/irst equation below \cite[equation~(4.7)]{KSSV} reads for $x \neq y$, $x,y \in [b_V + 1, \infty)$:
\begin{gather}\label{KSSV1}
K_{N,V}(x,y) = \frac{1}{x-y} m(y)^T
\begin{pmatrix}
0&-1\\1&0
\end{pmatrix} A^{-1} \hat{R}_+(y)^{-1} \hat{R}_+(x) A m(x),
\end{gather}
where $A$ denotes an invertible complex $2 \times 2$-matrix \cite[equation~(4.3)]{KSSV}, the vector valued function $m$ is def\/ined by $m := k \circ \lambda_V^{-1}$ with $k$ as given in \cite[Theorem 1.3(a), case $x > 1$]{KSSV}, and f\/inally $\hat{R}_+ := R_+ \circ \lambda_V^{-1}$ with the matrix valued function $R_+$ of \cite[Lemma~3.8]{KSSV}. Note that $R_+$ also depends on $N$ which is suppressed in the notation.

Let us f\/irst evaluate $m$. Keeping in mind that $\eta_V$ of the present paper equals $\eta_V\circ \lambda_V^{-1}$ of~\cite{KSSV} and using def\/inition \cite[equation~(1.18)]{KSSV} we compute
\begin{gather}\label{def1_m}
m(x) :=\frac{1}{\sqrt{4\pi}} e^{-\frac{N}{2}\eta_V(x)} \begin{pmatrix}
		c_V(x) \\ c_V(x)^{-1}
	\end{pmatrix},\qquad 	c_V(x):=\frac{(x-b_V)^{1/4}}{(x-a_V)^{1/4}} .
\end{gather}
Before discussing $\hat{R}$ we can already explain how the leading order description of $K_{N, V}(x, x)$ arises. Write
\begin{gather}\label{KSSV2}
\hat{R}_+(y)^{-1} \hat{R}_+(x) = {\operatorname{Id}} + \hat{R}_+(y)^{-1}\big(\hat{R}_+(x) - \hat{R}_+(y)\big) =: (1) + (2) .
\end{gather}
The contribution to $K_{N, V}(x, x)$ from the f\/irst summand $(1)=\operatorname{Id}$, i.e., replace in \eqref{KSSV1} the term $\hat{R}_+(y)^{-1} \hat{R}_+(x)$ by Id and take the limit $y \to x$, is given by
\begin{gather*}
m_1'(x) m_2(x) - m_1(x) m_2'(x) =m_1(x) m_2(x) [(\log m_1)'(x) - (\log m_2)'(x)] \\
\hphantom{m_1'(x) m_2(x) - m_1(x) m_2'(x)}{} = \frac{1}{4\pi} e^{-N \eta_V(x)} \frac{b_V -a_V}{2(x-b_V)(x-a_V)}
\end{gather*}
and hence equals the leading order term in \eqref{K_superlarge}. To estimate the contribution from (2) in~\eqref{KSSV2} we use Lemma~\ref{lemma_R} below that takes the role of \cite[Theorem~3.9]{KSSV} in the proof of \cite[Theorem~1.5(ii)]{KSSV}. Denote by $X_0$ the positive number that is introduced in Lemma~\ref{lemma_R}. State\-ment~(ii) of Lemma~\ref{lemma_R} and the fundamental theorem of calculus provide $\hat{R}_+(x) - \hat{R}_+(y) = \mathcal{O}( \frac{|x-y|}{Nxy})$ for all $x,y > X_0$ with $x \neq y$. Statement~(i) implies that there exists a $X_1 \geq X_0$ such that for all $y > X_1$ the matrix $\hat{R}_+(y)$ is suf\/f\/iciently close to the identity matrix such that $\hat{R}_+(y)^{-1}$ is uniformly bounded for $y > X_1$ and for all~$N$. It follows from~\eqref{def1_m} that $m(x) = e^{-\frac{N}{2}\eta_V(x)} \mathcal{O}(1)$ uniformly for $x > X_0$. The combination of all these estimates shows that the contribution of the second summand~(2) in~\eqref{KSSV2} to $K_{N, V}(x, x)$, again in the limit $y \to x$, is bounded by
\begin{gather*}
e^{-N \eta_V(x)} \mathcal{O}\left( \frac{1}{Nx^2} \right) = \frac{b_V-a_V}{8\pi} \frac{e^{-N\eta_V(x)}}{(x-b_V)(x-a_V)}\,
\mathcal{O}\left( \frac{1}{N} \right),
\end{gather*}
uniformly for $x > X_1$. Thus we are only left to prove \eqref{K_superlarge} uniformly for $x \in [b_V + 1, X_1]$ in case this set is not empty. This, however, follows already from \cite[Theorem~1.5(ii)]{KSSV}.
\end{proof}

We now turn to the analysis of the matrix valued function $\hat{R}_+$ that equals $R_+$ of \cite{KSSV} up to the linear rescaling $\lambda_V$. The function $R$ is analytic on $\mathbb{C} \setminus \Sigma_R$, where $\Sigma_R$ denotes an unbounded contour that is sketched in Fig.~\ref{figure_2} (cf.~\cite[Fig.~2]{KSSV} and observe that the rightmost circle is not present since we are in the case $J = \mathbb{R}$). Note that the role of~$b_V$ is taken by~$1$ since we consider~$R$ instead of~$\hat{R}$. The def\/inition of~$R$ is rather involved \cite[Lemma~3.8]{KSSV} but we do not need it. All that is important for us is that $R$ solves the Riemann--Hilbert problem stated in \cite[Lemma~3.8(i)$R$, (ii)$R$]{KSSV}. The functions~$R_{\pm}$ that appear there are def\/ined on~$\Sigma_R$ as the limits of $R$ when approaching~$\Sigma_R$ from the left resp.\ right with respect to the orientation of~$\Sigma_R$. This f\/inally answers the question how the function $\hat{R}_+$ is def\/ined that appears in~\eqref{KSSV1}.

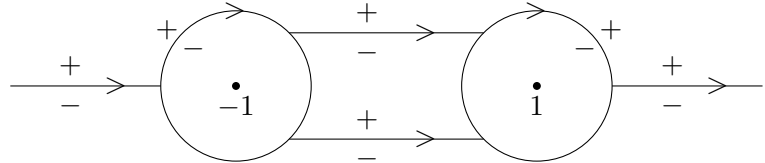
\begin{figure}[ht]\centering
\begin{tikzpicture}[scale=1.0]
	
	\coordinate [label=below: $-1$] (a) at (-2,0);
	\coordinate[label=below: $1$] (b) at (2,0);
	
	\fill (a) circle (1.5pt);
	\fill (b) circle (1.5pt);
	
	\draw (b) circle (1);
	\draw (a) circle (1);
	
	\draw (-5,0)--(-3,0)
		node[pos=0.7] {$>$}
		node[pos=0.4,above] {$+$}
		node[pos=0.4,below] {$-$};
		
	\draw (3,0)--(5,0)
		node[pos=0.7] {$>$}
		node[pos=0.4,above] {$+$}
		node[pos=0.4,below] {$-$};
		
	\draw (-1.2929,0.7071)--(1.2929,0.7071)
		node[pos=0.7] {$>$}
		node[pos=0.4,above] {$+$}
		node[pos=0.4,below] {$-$};	
	
	\draw (-1.2929,-0.7071)--(1.2929,-0.7071)
		node[pos=0.7] {$>$}
		node[pos=0.4,above] {$+$}
		node[pos=0.4,below] {$-$};	
		
	\coordinate [label=0: $>$] (pb) at (-2.3,1);
	\coordinate [label=0: $>$] (pa) at (1.7,1);
	
	\coordinate [label=0: $+$] (plus_a) at (-3.2,0.75);
	\coordinate [label=0: $-$] (plus_a) at (-2.85,0.5);
	
	\coordinate [label=0: $+$] (plus_b) at (2.7,0.75);
	\coordinate [label=0: $-$] (plus_b) at (2.35,0.5);

\end{tikzpicture}
\caption{The contour $\Sigma_R$.}\label{figure_2}
\end{figure}

The next piece of information that we use from \cite[Lemma 3.8]{KSSV} is the smallness of $\Delta_R$ that appears in the jump matrix of the Riemann--Hilbert problem for $R$, i.e., for $\zeta \in \Sigma_R$:
\begin{gather}\label{KSSV3}
R_+(\zeta)=R_-(\zeta) [{\operatorname{Id}} + \Delta_R(\zeta) ] \qquad \text{and} \\ \label{KSSV3a}
\Vert \Delta_R\Vert_{L^1 ( \Sigma_R ) } + \Vert \Delta_R\Vert_{L^{\infty} ( \Sigma_R ) }=\mathcal{O}\big( N^{-1}\big).
\end{gather}
This implies for suf\/f\/iciently large values of $N$ that $R$ has a representation as a Cauchy transform
\begin{gather}\label{KSSV4}
R(z)={\operatorname{Id}}+\frac{1}{2\pi i}\int_{\Sigma_R} \frac{(\Delta_R+\tilde{\mu}\Delta_R)(\zeta)}{\zeta-z}\,{\mathrm d} \zeta, \qquad
	z \in \mathbb{C} \setminus \Sigma_R ,
\end{gather}
where $\tilde{\mu}$ is the solution of a particular singular integral equation (see \cite[Proof of Theorem~3.9, in particular equation~(3.35)]{KSSV}, cf.~\cite[Section~7.5]{Deift} for more background information). From the smallness of~$\Delta_R$ as expressed in~\eqref{KSSV3a} it follows that the underlying singular integral operator is of the form ${\operatorname{Id}} + \mathcal{O}\big( N^{-1}\big)$ as an operator on~$L^2(\Sigma_R)$ and its inverse is thus uniformly bounded for~$N$ suf\/f\/iciently large. This then gives
\begin{gather*}
\Vert \tilde{\mu} \Vert_{L^{2}\left( \Sigma_R \right) }=\mathcal{O}\big( N^{-1}\big)
\end{gather*}
for $N$ suf\/f\/iciently large. In fact, this is the only argument in the proof of Theorem~\ref{theorem_sl} where we use that~$N$ is assumed to be big. For the remaining part of our discussion we assume that~$N$ satisf\/ies this requirement.

There is a dif\/f\/iculty in using \eqref{KSSV4} for our purposes. We are interested in $R_+(x)$ for large values of $x$. Therefore $x \in \Sigma_R$
and the singularity in the denominator of~\eqref{KSSV4} does not allow for pointwise bounds if we only know that the numerator is in some $L^q$ space.

As in \cite{KSSV} we deal with this issue by using the assumed real analyticity of $V$. Then, by~\eqref{def1_eta},~\eqref{aux_eta}, the function~$\eta_V$ is also real analytic. From the def\/inition of $\Delta_R$ \cite[equations~(3.24) and~(3.10)]{KSSV} on the relevant part of $\Sigma_R$, i.e., the rightmost half line in Fig.~\ref{figure_2}, it follows that the jump matrix ${\operatorname{Id}} + \Delta_R$ of the Riemann--Hilbert problem \eqref{KSSV3} also has an analytic extension. In this situation one may deform the contour of the Riemann--Hilbert problem into the region of analyticity of the jump matrix (see, e.g., \cite[Section~7.3]{Deift} for a discussion in a more general setting). Given $x$ large, we use the contour $\Sigma_x$ that is obtained from $\Sigma_R$ by replacing the interval $(x -\kappa_x, x+ \kappa_x)$ by a half circle as shown in Fig.~\ref{figure_1}.

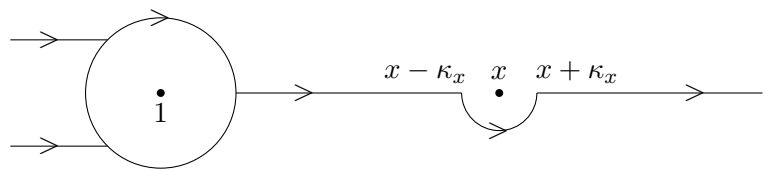
\begin{figure}[ht] \centering
\begin{tikzpicture}[scale=1.0]
	
	\coordinate [label=below: $1$] (b) at (-4,0);
	\coordinate [label=above: $x-\kappa_x\phantom{.........}$] (x-k) at (0,0);	
	\coordinate [label=above: $x$] (x) at (0.5,0.05);
	\coordinate [label=above: $\phantom{..........}x+\kappa_x$] (x+k) at (1,0);

	\fill (b) circle (1.5pt);
	\fill (0.5,0) circle (1.5pt);
	
	
		\draw (b) circle (1);
		\draw (1,0) arc (0:-180:0.5cm);
	
	
		\draw (-3,0) -- (0,0)
			node[pos=0.3] {$>$};
		
		\draw (1,0) -- (4,0)
			node[pos=0.7] {$>$};
	
		\draw (-6,0.7071)--(-4.7071,0.7071)
			node[pos=0.4] {$>$};
		
		\draw (-6,-0.7071)--(-4.7071,-0.7071)
			node[pos=0.4] {$>$};
	
		
		\coordinate [label=0: $>$ ] (pb) at (-4.3,1);
		\coordinate [label=0: $>$] (px) at (0.2,-0.5);

\end{tikzpicture}
\caption{Extract from the contour $\Sigma_x$.}\label{figure_1}
\end{figure}
Of course, $\kappa_x > 0$ has to be chosen such that the lower half disc centered at $x$ and with radius $\kappa_x$ is contained in the domain of analyticity of $\Delta_R$. The jump matrix of the modif\/ied Riemann--Hilbert problem is still given by ${\operatorname{Id}} + \Delta_R$ on the lower half circle and its solution $R_x$ coincides with $R$ except on the lower half disc. Thus we have
\begin{gather}\label{KSSV6}
R_+(x) = R_x(x) \qquad \text{and} \qquad R_+'(x) = R_x'(x) .
\end{gather}
Again we may express the solution $R_x$ of the modif\/ied Riemann--Hilbert problem by a Cauchy transform
\begin{gather}\label{KSSV7}
	R_x(z) -{\operatorname{Id}} = \frac{1}{2\pi i}\int_{\Sigma_x} \frac{(\Delta_R+\tilde{\mu}_x\Delta_R)(\zeta)}{\zeta-z}\,{\mathrm d} \zeta, \qquad
	z \in \mathbb{C} \setminus \Sigma_x .
\end{gather}
To ensure that $\Vert \tilde{\mu}_x \Vert_{L^{2} ( \Sigma_x) }=\mathcal{O} (N^{-1})$ uniformly in $x$, we need to verify that relation~\eqref{KSSV3a} holds uniformly if $\Sigma_R$ is replaced by $\Sigma_x$. Thus we have to estimate~$\Delta_R$ on the lower half circle. This is the point where we begin using the additional assumptions on $V$ that are formulated in $({\bf GA})_\infty$(2). It follows from the def\/inition \cite[equations~(3.24) and (3.10)]{KSSV} that $\Delta_R(z) = \mathcal{O}\big( \vert e^{-N \eta_V(z)} \vert \big)$ for $\lambda_V(z) \in \mathcal{U}(n,x_0)$ and $x_0$ suf\/f\/iciently large. Due to \eqref{aux6} that continues to hold on $\mathcal{U}(n,x_0)$ it follows from the lower bound on $\operatorname{Re}(V)$ formulated in $({\bf GA})_\infty$(2) that also $\operatorname{Re}(\eta_V(z)$) grows at least linearly as $\operatorname{Re}(z)$ becomes large. Hence there exists $\tilde{d} > 0$ such that
\begin{gather}\label{KSSV8}
\Delta_R(z) = \mathcal{O}\big( e^{-N \tilde{d} \operatorname{Re}(z)} \big)
\end{gather}
for $\lambda_V(z) \in \mathcal{U}(n,x_0)$ and $x_0$ suf\/f\/iciently large. This clearly implies~\eqref{KSSV3a} for $\Sigma_x$ with an $\mathcal{O}$-term that is uniform in $x$.

In order to estimate $R_+(x) - {\operatorname{Id}} = R_x(x) - {\operatorname{Id}}$ and $R_+'(x)$ = $R_x'(x)$ (cf.~\eqref{KSSV6}) we recall~\eqref{KSSV7} that also provides a formula for~$R_x'$:
\begin{gather}\label{KSSV9}
	R_x'(z) = \frac{1}{2\pi i}\int_{\Sigma_x} \frac{(\Delta_R+\tilde{\mu}_x\Delta_R)(\zeta)}{(\zeta-z)^2}\,{\mathrm d} \zeta , \qquad
	z \in \mathbb{C} \setminus \Sigma_x .
\end{gather}
For the estimates we split the contour of integration $\Sigma_x$ in two parts
\begin{gather*}
\Sigma_x^{(1)} : = \{ \zeta \in \Sigma_x \colon \vert \zeta - x \vert\ \geq x/2\}
\qquad \text{and} \qquad
\Sigma_x^{(2)} : = \{ \zeta \in \Sigma_x \colon \vert \zeta - x \vert\ < x/2\} .
\end{gather*}
Correspondingly we write $A_j$ resp.~$B_j$, $j \in \{1, 2\}$, for the contributions to the values of $R_x(x) - {\operatorname{Id}}$ resp.\ of~$R_x'(x)$ that stem from integration over $\Sigma_x^{(j)}$ in \eqref{KSSV7} resp.\ in~\eqref{KSSV9}. Since $\Vert \Delta_R \Vert_{L^1(\Sigma_x)}$, $\Vert \tilde{\mu}_x\Vert_{L^2(\Sigma_x)}$, and $\Vert \Delta_R \Vert_{L^2(\Sigma_x)}$ are all of order $1/N$, uniformly for suf\/f\/iciently large $x$, the numerator in~\eqref{KSSV7},~\eqref{KSSV9} is also of order $1/N$ in the $L^1(\Sigma_x)$-norm and it follows immediately that
\begin{gather}\label{KSSV10}
A_1 = \mathcal{O}\left( \frac{1}{Nx} \right) \qquad \text{and} \qquad B_1 = \mathcal{O}\left( \frac{1}{Nx^2} \right)
\end{gather}
for suf\/f\/iciently large values of $x$.

We turn to the contribution from $\Sigma_x^{(2)}$. Due to \eqref{KSSV8} and since the length of $\Sigma_x^{(2)}$ is bounded by $\pi x$ we have that for $x$ large that $\Vert \Delta_R \Vert = \mathcal{O}\big(x e^{-N \tilde{d} x/2}\big)$ in both the $L^1(\Sigma_x^{(2)})$- and
$L^2(\Sigma_x^{(2)})$-norm. Since the distance from $x$ to $\Sigma_x^{(2)}$ is bounded below by the radius $\kappa_x$ we obtain
\begin{gather}\label{KSSV11}
A_2= \mathcal{O}\left( \frac{x e^{-N \tilde{d} x/2}}{\kappa_x} \right) \qquad \text{and} \qquad
B_2 = \mathcal{O}\left( \frac{x e^{-N \tilde{d} x/2}}{\kappa_x^2} \right).
\end{gather}
Assumption $({\bf GA})_\infty$(2) ensures that we may choose $\kappa_x$ of order $x^{-n}$ which suf\/f\/ices amply for proving that~$A_2$ resp.~$B_2$ can be bounded in the same way as $A_1$ resp.~$B_1$ in~\eqref{KSSV10}. In summary, we have derived
\begin{gather*}
R_+(x) -{\operatorname{Id}} = \mathcal{O}\left( \frac{1}{Nx} \right) \qquad \text{and} \qquad R_+'(x) = \mathcal{O}\left( \frac{1}{Nx^2} \right)
\end{gather*}
for $x$ suf\/f\/iciently large. Since $\hat{R}_+ = R_+ \circ \lambda_V^{-1}$ these estimates carry over to $\hat{R}_+$ which is precisely the content of

\begin{Lemma}\label{lemma_R}
Assume that $V$ satisfies $({\bf GA})_\infty$ and let $\hat{R}$ be defined as in the proof of Theorem~{\rm \ref{theorem_Ksl}}.
Then there exists a positive $X_0 > b_V$ such that for sufficiently large values of~$N$
\begin{gather*}
(i) \ \ \hat{R}_+(x) - {\operatorname{Id}} = \mathcal{O}\left( \frac{1}{Nx}\right) \qquad \text{and} \qquad (ii) \ \ \hat{R}_+'(x)=\mathcal{O}\left( \frac{1}{Nx^2}\right)
\end{gather*}
hold uniformly for all $x > X_0$.
\end{Lemma}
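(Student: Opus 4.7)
The plan is to apply the small-norm Riemann--Hilbert framework to the problem for $R$ recalled from \cite[Lemma~3.8]{KSSV} at the evaluation point $x$ lying on the contour itself. The Cauchy-kernel representation \eqref{KSSV4} blows up when $z \in \Sigma_R$, so the strategy is to locally deform $\Sigma_R$ away from $x$ into the region where the jump matrix is analytic, use the deformed solution $R_x$ in place of $R$ via $R_+(x)=R_x(x)$ and $R_+'(x)=R_x'(x)$, and then split the Cauchy integral into a far-field and a near-field piece. Because $\hat R_+ = R_+ \circ \lambda_V^{-1}$ and $\lambda_V$ is affine, bounds on $R_+$ transfer to $\hat R_+$ up to a redefinition of $X_0$.

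First, I would exploit $({\bf GA})_\infty$(2) to get pointwise decay of $\Delta_R$ in the strip $\mathcal{U}(n,x_0)$. Using \eqref{aux6}, now applied to the analytic continuation, together with $\operatorname{Re} V(z) \ge d_V \operatorname{Re} z$, one obtains $\operatorname{Re} \eta_V(z) \ge \tilde d \operatorname{Re} z$ for some $\tilde d > 0$ once $\operatorname{Re} z$ is large, which by the definition of $\Delta_R$ in \cite[equations (3.24), (3.10)]{KSSV} gives $\Delta_R(z) = \mathcal{O}\bigl(e^{-N\tilde d \operatorname{Re} z}\bigr)$ throughout $\mathcal{U}(n,x_0)$. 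For each large $x$, I replace the segment $(x-\kappa_x,x+\kappa_x)$ of $\Sigma_R$ by a half-circle into the lower half-plane with radius $\kappa_x \asymp x^{-n}$, so the half-disc fits inside $\mathcal{U}(n,x_0)$. The exponential decay dominates the $O(x)$ length of this half-circle, so $\|\Delta_R\|_{L^1(\Sigma_x)} + \|\Delta_R\|_{L^\infty(\Sigma_x)} = \mathcal{O}(1/N)$ uniformly for large $x$. Standard small-norm theory for the singular integral operator on $L^2(\Sigma_x)$ then yields $\|\tilde\mu_x\|_{L^2(\Sigma_x)} = \mathcal{O}(1/N)$ uniformly, provided $N$ is large enough.

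Next I would use the Cauchy representations for $R_x(x) - \operatorname{Id}$ and $R_x'(x)$, with kernels $(\zeta-x)^{-1}$ and $(\zeta-x)^{-2}$ respectively, and split $\Sigma_x = \Sigma_x^{(1)} \cup \Sigma_x^{(2)}$ at $|\zeta - x| = x/2$. On $\Sigma_x^{(1)}$ the kernels are bounded by $2/x$ and $(2/x)^2$, while the numerator $\Delta_R + \tilde\mu_x \Delta_R$ has $L^1(\Sigma_x)$-norm $\mathcal{O}(1/N)$ (via $L^1$ for the first term and Cauchy--Schwarz on $L^2 \times L^2$ for the second); this gives the contributions $\mathcal{O}\bigl(1/(Nx)\bigr)$ and $\mathcal{O}\bigl(1/(Nx^2)\bigr)$. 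On $\Sigma_x^{(2)}$ the distance to $x$ is bounded below by $\kappa_x$, but $\Delta_R$ decays like $e^{-N\tilde d x/2}$ there, so the contributions are $\mathcal{O}\bigl(x e^{-N\tilde d x/2}/\kappa_x\bigr)$ and $\mathcal{O}\bigl(x e^{-N\tilde d x/2}/\kappa_x^{2}\bigr)$.

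The main obstacle is the last balancing act: $\kappa_x$ must be small enough to keep the half-disc inside $\mathcal{U}(n,x_0)$ yet large enough that the near-field contributions are dominated by the far-field ones. The choice $\kappa_x \asymp x^{-n}$ is forced by the shape of $\mathcal{U}(n,x_0)$; it produces near-field bounds of order $x^{n+1} e^{-N\tilde d x/2}$ and $x^{2n+1} e^{-N\tilde d x/2}$, both of which are much smaller than $1/(Nx)$ and $1/(Nx^2)$ once $x > X_0$ with $X_0$ chosen large enough (and $N$ at least of order one). Combining the two regions and mapping back through $\lambda_V^{-1}$ yields (i) and (ii) uniformly in $x > X_0$.
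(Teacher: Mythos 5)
Your proposal is correct and follows essentially the same route as the paper's proof: the local contour deformation of $\Sigma_R$ by a half-disc of radius $\kappa_x \asymp x^{-n}$ into the analyticity strip $\mathcal{U}(n,x_0)$, the uniform small-norm estimate on $\Sigma_x$ derived from the exponential decay $\Delta_R(z) = \mathcal{O}\bigl(e^{-N\tilde d\operatorname{Re}(z)}\bigr)$ coming from $({\bf GA})_\infty$(2) via \eqref{aux6}, the far-field/near-field split of the Cauchy integral at $|\zeta - x| = x/2$, and the final balancing showing the near-field contributions $\mathcal{O}\bigl(x^{n+1}e^{-N\tilde d x/2}\bigr)$ and $\mathcal{O}\bigl(x^{2n+1}e^{-N\tilde d x/2}\bigr)$ are dominated. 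The only cosmetic slip is attributing the $\mathcal{O}(x)$ length to the half-circle itself (whose length is $\pi\kappa_x$) rather than to the full near-field piece $\Sigma_x^{(2)}$, but this does not affect the argument.
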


\subsection*{Acknowledgements}

All authors acknowledge support received from the Deutsche Forschungsgemeinschaft within the program of the SFB/TR 12. The second author is grateful to Dr.\ Martin Venker for many fruitful discussions while collaborating on~\cite{KrVe} that have inf\/luenced the presentation of the present paper.

\pdfbookmark[1]{References}{ref}
\LastPageEnding

\end{document}